\def\figurename{Figure} 
\renewcommand{\fnum@figure}[1]{\figurename~\thefigure.}
\def\tablename{Table} 
\renewcommand{\fnum@table}[1]{\tablename~\thetable.}
\newtheorem{theorem}{Theorem}[section]
\newtheorem{lemma}[theorem]{Lemma}
\newtheorem{corollary}[theorem]{Corollary}
\newtheorem{proposition}[theorem]{Proposition}
\theoremstyle{definition}
\newtheorem{definition}[theorem]{Definition}
\newtheorem{example}[theorem]{Example}
\theoremstyle{remark}
\newtheorem{remark}[theorem]{Remark}
\numberwithin{equation}{section}
\def\C{\mathbb C}
\def\R{\mathbb R}
\def\K{\mathbb K}
\def\N{\mathbb N}
\newcommand{\D}{\mathcal D}
\newcommand{\G}{\mathcal{G}}
\newcommand{\der}{\text{der}}
\newcommand{\sspan}{\text{span}}
\newcommand{\s}{\mathfrak s}
\newcommand{\beqn}{\begin{eqnarray}}
\newcommand{\eeqn}{\end{eqnarray}}
\newcommand{\beq}{\begin{equation}}
\newcommand{\eeq}{\end{equation}}
\newcommand{\bpro}{\begin{proposition}}
\newcommand{\epro}{\end{proposition}}
\newcommand{\blem}{\begin{lemma}}
\newcommand{\elem}{\end{lemma}}
\newcommand{\bdfn}{\begin{definition}}
\newcommand{\edfn}{\end{definition}}
\newcommand{\bcor}{\begin{corollary}}
\newcommand{\ecor}{\end{corollary}}
\newcommand{\bthm}{\begin{theorem}}
\newcommand{\ethm}{\end{theorem}}
\newcommand{\bex}{\begin{example}}
\newcommand{\eex}{\end{example}}
\newcommand{\brmq}{\begin{remark}}
\newcommand{\ermq}{\end{remark}}
\newcommand{\benum}{\begin{enumerate}}
\newcommand{\eenum}{\end{enumerate}}
\newcommand{\bitem}{\begin{itemize}}
\newcommand{\eitem}{\end{itemize}}
\begin{document}
\title{\bfseries\scshape{Automorphisms of cotangent bundles of Lie groups}}
\author{\bfseries\scshape  Andr\'e DIATTA \thanks{E-mail address: \tt{andre.diatta@fresnel.fr, andrediatta@gmail.com}}\\
 CNRS, Institut Fresnel, UMR 7249, \\
Aix-Marseille Universit\'e, Centrale Marseille,
\\ 13013 Marseille, France.
\\\bfseries\scshape Bakary MANGA\thanks{E-mail address: \tt{ bakary.manga@imsp-uac.org, bakary.manga@ucad.edu.sn}}\\
D\'epartement de Math\'ematiques et Informatique, \\ 
 Universit\'e Cheikh Anta Diop de Dakar,\\
BP 5005 Dakar-Fann, Dakar, S\'en\'egal
\\ and\\
Institut de Math\'ematiques et de Sciences Physiques (IMSP)
\\
 01 BP 613, Porto-Novo,
B\'enin.}

\date{}
\maketitle \thispagestyle{empty} \setcounter{page}{1}

\thispagestyle{fancy} \fancyhead{}
\fancyfoot{}
\renewcommand{\headrulewidth}{0pt}

\begin{abstract}
Let $G$ be a Lie group, $\G$ its Lie algebra and $T^*G$ its cotangent bundle. 
On  $T^*G,$ we consider the Lie group structure obtained by performing a left 
trivialization and endowing the resulting trivial bundle  $G\times\G^*$ with the 
semi-direct product, using the co-adjoint action of $G$ on the dual space  $\G^*$ of  $\G$.
 We investigate the group of automorphisms of the Lie algebra $\D:=T^*\G$ of $T^*G.$ More precisely, 
 we fully characterize the Lie algebra of all derivations of $\D,$ exhibiting a finer decomposition into 
 components made of well known spaces. Further, we specialize to the cases where $G$ has a  bi-invariant Riemannian 
 or pseudo-Riemannian metric, with the semi-simple and compact cases investigated as particular cases.
\end{abstract}

\noindent {\bf AMS Subject Classification:} 22C05; 22E60; 22E15; 22E10.

\vspace{.08in} \noindent \textbf{Keywords}: Cotangent bundle, automorphism, derivation, Lie group,
Lie algebra, bi-invariant metric, bi-invariant tensor, supersymmetric Lie group, Lie superalgebra, Lie supergroup.

\section{Introduction}

Throughout this work, all Lie groups and Lie algebras are over $\mathbb K=\mathbb R$ or $\mathbb C,$ unless otherwise specified. 
Let G be a Lie group whose Lie algebra $\G $ is identified with its tangent space $T_{\epsilon}G$ at the unit
$\epsilon$. We always look at the cotangent bundle $T^*G$  of $G$, as a Lie group obtained by performing the semi-direct product $G\ltimes\G^*$
of $G$ and the Abelian Lie group $\mathcal G^*$, where $G$ acts on the
dual space $\G^*$ of $\G$ via the co-adjoint action. Here, using the
trivialization by left translations, the manifold underlying $T^*G$ has been
identified with the trivial  bundle $G\times \G^*$. We sometimes refer to the above Lie group structure on $T^*G,$ as 
its natural Lie group structure.  The Lie algebra  $Lie(T^*G)=\G\ltimes
 \G^*$ of $T^*G$ will be denoted by $T^*\G$ or simply by $\D$.

It is our aim in this work to study the connected component of the unit
of the group Aut($\mathcal D$) of automorphisms of the Lie algebra $\mathcal D$.
Such a connected component is precisely the connected subgroup of Aut($\mathcal D$), spanned by the exponentials of the derivations of
$\D$. Therefore, we will work locally, that is, with those derivations and the first
cohomology space $H^1(\D,\D)$, where $\D$ is seen as a $\D$-module for the adjoint representation.

Our motivation for this work comes from several interesting mechanical, algebraic and geometric problems.
Cotangent bundles of Lie groups appear amongst key tools in various mathematical formulations for the motion of particles 
(see e.g. \cite{alekseevsky-grabowski94}, \cite{marsden-ratiu-weinstein}, \cite{montgomery}). In particular they are the configuration 
space of some mechanical systems (see e.g. \cite{marle}). They can also exhibit rich algebraic and 
geometric structures (see e.g. \cite{bajo-benayadi-medina},  \cite{di-me-cybe}, \cite{drinfeld}, \cite{feix}, \cite{kronheimer}, \cite{marle}).
All such structures can be better understood when one can compare, deform or
classify them. This very often involves the invertible homomorphisms
(automorphisms) of $T^*G$, if in particular, such structures are invariant under left or right multiplications by the  
elements of $T^*G$. The derivatives at the unit of automorphisms of the Lie group $T^*G$ are
automorphisms of the Lie algebra $\D.$ Conversely, if $G$ is connected and simply connected,
then so is $T^*G$ and every automorphism of the Lie algebra $\D$
integrates to an automorphism of the Lie group $T^*G$. A problem involving left or right  invariant
structures on a Lie group also usually transfers to one on its Lie algebra, with the Lie
algebra automorphisms used as a means to compare or classify the corresponding induced structures.

In the purely algebraic point of view, finding and understanding the
derivations of a given Lie algebra or superalgebra, is in itself an important
problem  (\cite{burde2002}, \cite{burde2012}, \cite{deruiter}, \cite{dixmier57}, \cite{feldvoss}, \cite{jacobson55}, \cite{jacobson37},
\cite{leger63}, \cite{peng2014}, \cite{togo67}).

On the other hand, as a Lie group, the cotangent bundle $T^*G$ is a common
Drinfel'd double Lie group for all exact Poisson-Lie structures given by
solutions of the Classical Yang-Baxter Equation in $G$. See e.g. \cite{di-me-poisson}.
Double Lie algebras (resp. groups) encode information on
integrable Hamiltonian systems and Lax pairs (\cite{babelon-viallet}, \cite{bordemann}, \cite{drinfeld}, \cite{lu-weinstein}), 
Poisson homogeneous spaces of Poisson-Lie groups and the symplectic foliation of the corresponding Poisson structures 
(\cite{di-me-poisson}, \cite{drinfeld}, \cite{lu-weinstein}). To that extend, the description of the group of 
automorphisms of the double 
Lie algebra of a Poisson-Lie structure might be a contribution towards solving some of the open  problems in that field.

We find that the space of derivations of  $\D$
encompasses  spaces of  known and wildly used operators, amongst
which the derivations of  $\G$, the second space of the left invariant de
Rham cohomology $H^2_{inv}(G,\mathbb K)$ of $G$ (see Section \ref{chap:cocyles}), 
bi-invariant endomorphisms, in particular operators giving rise to complex group structure in
$G$, when they exist.



Traditionally, spectral sequences are used as a powerful tool for the study  of the cohomology spaces of extensions of Lie groups 
or Lie algebras and more generally, of locally trivial fiber bundles (see e.g. \cite{hochschild-serre:coho-algebras}, 
\cite{hochschild-serre:coho-ext-groups}, \cite{knapp}, \cite{neeb} for wider discussions). However, for the purpose of this investigation, 
we use a direct approach. 

The paper is organized as follows. 
In Section \ref{chap:preliminaries}, we explain some of the material and terminology needed to make the paper more self contained. 
Sections \ref{chap:automorphisms} and \ref{chap:orthogonal-algebra} are the actual core of the work where the main calculations 
and proofs of theorems are carried out. We discuss some examples in low dimension in Section \ref{section:examples}. We will see that some of those
examples give rise to contact Lie algebras. See \cite{diatta-contact}, \cite{diatta-contact-riemann}, for wider discussions on contact Lie algebras.

\pagestyle{fancy} \fancyhead{} \fancyhead[EC]{A. Diatta and B. Manga} \fancyhead[EL,OR]{\thepage} \fancyhead[OC]{Automorphisms of cotangent bundles of Lie groups} \fancyfoot{}
\renewcommand\headrulewidth{0.5pt}

\section{Preliminaries}\label{chap:preliminaries}

Although not central to the purpose of the work within this paper, the following material might be useful, 
at least, as regards parts of the terminology used throughout this paper.


\medskip

Throughout this paper, given a Lie group $G$, we will always let $G\ltimes \G^*$ stand for the Lie group consisting of the 
Cartesian product $G\times \G^*$ as its underlying manifold, together with the group structure obtained by semi-direct product using the 
co-adjoint action of $G$ on $\G^*.$
Recall that  the trivialization by left translations, or simply the left trivialization of $T^*G$ is given by the following 
isomorphism $\zeta$ of vector bundles,
$\zeta: T^*G \to G\times \G^*$, $(\sigma,\nu_\sigma)\mapsto (\sigma,\nu_\sigma\circ T_\epsilon L_\sigma),$ 
 where $L_\sigma$ is the left multiplication  $L_\sigma:G\to G$,  $\tau\mapsto L_\sigma(\tau):=\sigma\tau$ by $\sigma$ in $G$ and $T_\epsilon L_\sigma$ is the derivative of $L_\sigma$ at the unit $\epsilon.$
In this paper, $T^*G$ will always be endowed with the Lie group structure such that  $\zeta$ is an isomorphism of Lie groups. 
The Lie algebra of $T^*G$ is then the semi-direct product $\D:=\G\ltimes\G^*.$ More precisely, the Lie bracket on $\D$ reads: 
for any two elements $(x,f)$ and $(y,g)$ of $\D$,
\beq \label{bracket_double}
[(x,f),(y,g)]:=([x,y],ad^*_xg-ad^*_yf).
\eeq


\medskip


We will  refer to an object which is invariant
under both left and right translations in a Lie group $G$, as a
bi-invariant object. We discuss in this section, how $T^*G$ is naturally endowed with a  bi-invariant  pseudo-Riemannian metric.

 A bi-invariant (Riemannian or pseudo-Riemannian) metric in a connected
Lie group $G$, corresponds to a symmetric bilinear non-degenerate scalar form $\mu$
in its Lie algebra $\G$, such that the adjoint representation of $\G$ lies in the Lie algebra
$\mathcal O(\G, \mu)$ of infinitesimal isometries of $\mu$, or equivalently
\beq\label{ad-invariant}
\mu([x, y], z) + \mu(y, [x, z]) = 0,
\eeq
for any $ x, y, z$ in $\G$. This means that $\mu$ is invariant under the adjoint action of $\G$.
In this case, we will simply say that $\mu$ is adjoint-invariant.
Lie groups with bi-invariant metrics and their Lie algebras are called
{\em orthogonal or quadratic} (see e.g. \cite{me-re85}, \cite{me-re93}). Every orthogonal
Lie group is obtained by a series of so-called {\em double extension}
constructions introduced by Medina and Revoy (\cite{me-re85}, \cite{me-re93}).
Consider the isomorphism of vector spaces $\theta : \G \to \G^*$ defined
by $\langle \theta(x),y \rangle := \mu(x,y)$, where $\langle, \rangle$ on
the left hand side is the duality pairing $\langle x,f \rangle=f(x)$ between elements $x$ of $\G$ and $f$ of $\G^*$. 
Then, $\theta$ is an isomorphism of $\G$-modules in
the sense that it is equivariant with respect to the adjoint and co-adjoint actions of
$\G$ on $\G$ and $\G^*$ respectively; {\it i.e.} for any $x$ in $\G$,
$
\theta \circ  ad_x =  ad^*_x \circ \theta.
$
We also have, for all element $x$ of $\G$, $\theta^{-1} \circ ad^*_x = ad_x \circ \theta^{-1}$.
 The converse is also true. Precisely, a Lie group (resp. algebra) is
orthogonal if and only if its adjoint and co-adjoint representations are isomorphic.
See Theorem 1.4. of \cite{me-re93}.

Semisimple Lie groups (with their Killing form), compact Lie groups, or more generally, reductive Lie groups, 
are examples of orthogonal Lie groups.

The cotangent bundle of any Lie group (with its natural Lie group structure, as above) and in
general any element of the larger family of the so-called Drinfel'd doubles,  are orthogonal Lie groups  (\cite{drinfeld}),
as explained below.  As above, let  $\mathcal D:=\G \ltimes \G^*$ be the Lie algebra of $T^*G$ with the Lie bracket given 
in (\ref{bracket_double}). Let $\mu_0$ stand for the duality pairing $\langle, \rangle$, that is, for all  $(x,f), \; (y,g)$ in $\D$,
\beq\label{eq:dualitypairing}
\mu_0\Big((x,f),(y,g)\Big) = f(y) + g(x).
\eeq
 Then, $\mu_0$ satisfies the property (\ref{ad-invariant})  on $\D$ and hence gives rise to a
 bi-invariant (pseudo-Riemannian) metric on $T^*G$.

\section{Group of automorphisms of $\D:=T^*\G$}\label{chap:automorphisms}

\subsection{Derivations of $\D:=T^*\G$}\label{subsect:derivations-cotangent}

Consider a Lie group $G$ of dimension $n$, with Lie algebra $\G,$ over $\mathbb K$.
Let us also denote by $\D$ the vector space underlying the Lie algebra
$\mathcal D$ of the cotangent bundle $T^*G$, regarded as a $\D$-module under the adjoint action of $\D$. Consider the following
complex with the coboundary operator $\partial$,
$0\!\! \to\!\! \D\!\! \to\!\! Hom(\mathcal D,\D)\!\! \to\!\! Hom(\Lambda^2\mathcal D,\D)\!\!  \to\!\! \cdots
\!\!\to\!\! Hom(\Lambda^{2n}\mathcal D,\D)\!\! \to\!\! 0.
$ 
The coboundary $\partial \phi$ of the element $\phi$ of $Hom(\mathcal D,\D)\!\!:=\!\! \{\phi\! :\!\! \mathcal D\!\! \to\!\! \D, 
\phi \mbox{ linear} \}$ is the element of $Hom(\Lambda^2\mathcal D,\D)$ defined by
$
\partial \phi (u,v)\!:=\! ad_u\big(\phi(v)\big)\! -\! ad_v\big(\phi(u)\big)\! - \!\phi([u,v]),
$
for any elements $u\!=\!(x,f)$ and $v\!=\!(y,g)$ in $\mathcal D$. An element $\phi$ of $Hom(\mathcal D,\D)$
is a $1$-cocycle if $\partial \phi = 0$, {\it i.e. }
\beqn\label{suz1}
\phi([u,v]) = ad_u\big(\phi(v)\big) - ad_v\big(\phi(u)\big)
            = [u,\phi(v)]+[\phi(u),v],
\eeqn
for all elements $u,v$ of $\mathcal D$. In other words,  $1$-cocycles are the derivations of the Lie
algebra $\mathcal D$. In Section \ref{chap:cohomology}, we will characterize the first cohomology space 
$H^1(\D,\D):=\ker(\partial^2)/Im(\partial^1)$ of the associated Chevalley-Eilenberg cohomology, where for clarity, we have 
denoted by $\partial^1$ and $\partial^2$ the  restrictions of the coboundary operator $\partial$, as follows:\\
 $\partial^1 :~ \D\to Hom(\D,\D)$ 
and $\partial^2:~Hom(\D,\D)$$\to$$ Hom(\wedge^2\D,\D).$
\begin{theorem}\label{derivationschar} Let $G$ be a
Lie group, $\G$ its Lie algebra, $T^*G$ its cotangent bundle and $\D:=\G\ltimes \G^*$ the Lie algebra of $T^*G$.
A $1$-cocycle (for the adjoint representation) hence a derivation of $\mathcal D$, has the following form:
\begin{eqnarray}\label{decompositionphi}
\phi(x,f) = \Big(\alpha(x) + \psi(f), \beta(x) + \xi(f)\Big),
\end{eqnarray}
for any $(x,f)$ in $\D$; where $\alpha: \mathcal G \to \mathcal G$ is a derivation of the Lie algebra
$\mathcal G$, $\beta :\mathcal G \to \mathcal G^*$
is a $1$-cocycle of $\mathcal G$ with values in $\mathcal G^*$ for the co-adjoint action of
$\mathcal G$ on $\mathcal G^*$, $\xi : \mathcal G^* \to \mathcal G^*$ and
$\psi : \mathcal G^* \to \mathcal G$ are linear maps satisfying the following conditions:
\beqn
[\xi,ad^*_x] &=& ad^*_{\alpha(x)}, \quad  \forall \; x \in \mathcal G, \label{relation-xi-alpha} \\
\psi \circ ad^*_x &=& ad_x \circ \psi, \quad \forall \; x \in \mathcal G, \label{relation-equivariance} \\
ad^*_{\psi(f)}g &=& ad^*_{\psi(g)}f, \quad \forall \; f,g \in \mathcal G^*.\label{relation-commutation}
\eeqn
\end{theorem}
\begin{proof}
Aiming to get a simpler expression for the derivations, let us write $\phi$ in terms of
its components relative to the decomposition of $\D$
into a direct sum $\D =\mathcal G \oplus  \G^*$ of vector spaces as follows: for all $(x,f)$ in $\D$,
\beq\label{yacine}
\phi(x,f)=\Big(\phi_{11}(x)+\phi_{21}(f),\phi_{12}(x)+\phi_{22}(f)\Big),
\eeq
where $\phi_{11}:\mathcal G \to \mathcal G$, $\phi_{12}: \mathcal G \to
\mathcal G^*$, $\phi_{21}: \mathcal G^* \to \mathcal G$ and
$\phi_{22}:\mathcal G^* \to \mathcal G^*$ are all linear maps. In (\ref{yacine}) we have
made the identifications: $x=(x,0),\; f=(0,f)$ so that $(x,f)$ can also
be written $x+f$. Likewise, we can write
$\phi(x)=(\phi_{11}(x),\phi_{12}(x))$ and $\phi(f)=(\phi_{21}(f),\phi_{22}(f))$,
for any $x$ in $\G$ and any $f$ in $\G^*$; or simply
$\phi(x)=\phi_{11}(x)+\phi_{12}(x)$ and $\phi(f)=\phi_{21}(f)+\phi_{22}(f)$.
In order to find the $\phi_{ij}$'s and hence  all the derivations of $\mathcal D$, we are now going to use the cocycle condition (\ref{suz1}).
For $x,y$ in $\mathcal G \subset \mathcal D$ we have:
\beq\label{suz2}
\phi([x,y]) = \phi_{11}([x,y]) + \phi_{12}([x,y])
\eeq
and
\beqn\label{suz3}
[\phi(x),y] + [x,\phi(y)] &=& [\phi_{11}(x) + \phi_{12}(x),y] + [x,\phi_{11}(y) + \phi_{12}(y)] \nonumber \\
                          &=& [\phi_{11}(x),y] - ad^*_y(\phi_{12}(x)) + [x,\phi_{11}(y)] +
                              ad^*_x(\phi_{12}(y)).
\eeqn
Comparing (\ref{suz2}) and (\ref{suz3}), we first get
$\phi_{11}([x,y]) = [\phi_{11}(x),y] + [x,\phi_{11}(y)]$,
for all  $x,y$ in $\G$. This means that  $\phi_{11}$ is a derivation of the Lie algebra $\mathcal G$.
Secondly, for any $x, y$ in $\G$,
$\phi_{12}([x,y])= ad^*_x(\phi_{12}(y)) - ad^*_y(\phi_{12}(x))$. 
That is $\phi_{12}$ is a $1$-cocycle of $\mathcal G$ with
values on $\mathcal G^*$ for the co-adjoint action of $\mathcal G$ on $\mathcal G^*$.

Now we are going to examine the following case: for all $x$ in $\mathcal G$ and all $f$ in $\mathcal G^*$,
\beqn\label{suz6}
\phi([x,f]) &=& \phi(ad^*_xf)
            = \phi_{21}(ad^*_xf) + \phi_{22}(ad^*_xf)
\eeqn
and
\beqn
[\phi(x),f] + [x,\phi(f)]  & = &  [\phi_{11}(x)+\phi_{12}(x),f]+[x,\phi_{21}(f) + \phi_{22}(f)]  \nonumber \\
                         &=& ad^*_{\phi_{11}(x)}f + [x,\phi_{21}(f)] + ad^*_x\big(\phi_{22}(f)\big).\label{suz7}
\eeqn
Identifying (\ref{suz6}) and (\ref{suz7}) we get on the one hand
 $\phi_{21}(ad^*_xf) = [x,\phi_{21}(f)]
                   = ad_x(\phi_{21}(f)),$
for all $x\in\mathcal G$ and $f\in\mathcal G^*$. We write the above as
$\phi_{21} \circ ad^*_x = ad_x \circ \phi_{21},$
~for all $x$ in $\mathcal G$. That is,  {\em $\phi_{21}:\mathcal G^* \to
  \mathcal G$ is equivariant (commutes) with respect to the adjoint and the co-adjoint actions of
$\mathcal G$
on $\mathcal G$ and $\mathcal G^*$ respectively}.
On the other hand, for all $x$ in $\mathcal G$ and $f$ in $\G^*$, 
$\phi_{22}(ad^*_xf) = ad^*_x(\phi_{22}(f)) + ad^*_{\phi_{11}(x)}f$.
This can be rewritten as
$\phi_{22} \circ ad^*_x - ad^*_x \circ \phi_{22} = ad^*_{\phi_{11}(x)}. $ 
That is, for any element $x$ of $\G$,
 $[\phi_{22},ad^*_x] = ad^*_{\phi_{11}(x)}.$ 
Last, for any $f$ and $g$ in $\G^*,$ we have
\beq\label{suz11}
\phi([f,g]) = 0,
\eeq
and
\beqn\label{suz12}
[\phi(f),g] + [f,\phi(g)] &=& [\phi_{21}(f)+\phi_{22}(f),g]+[f,\phi_{21}(g)+\phi_{22}(g)], \nonumber \\
                          &=& ad^*_{\phi_{21}(f)}g - ad^*_{\phi_{21}(g)}f.
\eeqn
From (\ref{suz11}) and (\ref{suz12}), for all elements $f,g$  of $\G^*$, we have
$ad^*_{\phi_{21}(f)}g = ad^*_{\phi_{21}(g)}f.$

Noting $\alpha :=\phi_{11}$, $\beta:=\phi_{12}$, $\psi:= \phi_{21}$ and $\xi:=\phi_{22}$, we get a proof of Theorem  \ref{derivationschar}.
\end{proof}
\brmq\label{notations}({\bf Notations }) From now on, if $\G$ is a Lie algebra, then \\
(1) $\hbox{\rm der}(\mathcal G)$ will stand for the space of all derivations of $\mathcal G$ ; \\
(2) $\mathcal E$ will stand for the space of linear maps $\xi : \mathcal G^* \to \mathcal G^*$  satisfying Equation (\ref{relation-xi-alpha}), 
for some derivation $\alpha$ of $\mathcal G$;\\
(3) $\G_0\!:=\big\{\phi:\D\to\D,  \phi (x,f)\! =\! (\alpha(x), \xi(f)):\alpha\!\!\in\!\der(\G), \xi\!\in\!\mathcal E, 
[\xi,ad^*_x]=ad^*_{\alpha(x)}, \forall x\in\G\big\};$\\
(4) we may let $\mathcal Q$ stand for the space of 1-cocycles $\beta \!:\! \mathcal G\to \mathcal G^*$  
as above, whereas $\Psi$ may be used for the space of equivariant 
linear $\psi\!:\! \mathcal G^* \to \mathcal G$ as in (\ref{relation-equivariance}), which satisfy (\ref{relation-commutation});\\
(5) we will denote by $\G_1,$ the direct sum  $\G_1:=\mathcal Q\oplus\Psi$ of the vector 
spaces $\mathcal Q$ and $\Psi$.
\ermq
\brmq\label{embeddings} The spaces $\der(\G)$, $\mathcal Q$ and $\Psi$, as in Remark \ref{notations}, are all subsets of $\der(\D),$ 
as follows. A derivation $\alpha$ of $\G$, an equivariant map $\psi\in\Psi,$ a 1-cocycle $\beta \in\mathcal Q$ 
are respectively seen as the elements $ \phi_{\alpha}, \phi_{\psi}, \phi_{\beta}$ of $\der(\D)$ as follows : 
for all $(x,f)$ in $\D$,
$$
\phi_{\alpha}(x,f):=(\alpha(x),-f\circ\alpha)\; ; \;\phi_{\psi}(x,f):=(\psi(f),0)\; ; \;
\phi_{\beta}(x,f):= (0,\beta(x)).
$$
\ermq
\bpro\label{prop:P1}Let $\G$ be a Lie algebra. With the same notations as above, we have: ~(a) if $\G$ has an invertible derivation, 
then so does $T^*\G.$ ~(b) if the Lie algebra $der(\D)$ is unimodular, then $\dim (\Psi)=\dim (\mathcal Q);$ ~(c) $\der(\D)$ is never nilpotent.
\epro
\noindent
%
\begin{proof}
(a) Let $\alpha$ be a linear map $\G\to\G.$ Then $\alpha$ is in $\der(\G)$ if and only if $\phi_{\alpha}$ is in $\der(\mathcal D).$ Moreover, 
$\ker(\phi_{\alpha})=\ker(\alpha)\oplus (Im(\alpha))^o,$ where $(Im(\alpha))^o$ is the subspace of $\G^*$ 
consisting of those $f,$ with $f\circ \alpha=0.$ Hence (a) is proved. For (b) and (c), see Section \ref{chap:bi-inv-endom}.
\end{proof}

\subsection{A structure theorem for the group of automorphisms of $\D$}
\blem\label{structure1}
 The space $\mathcal E,$ as in Remark \ref{notations}, is a Lie algebra.
Namely, if $\xi_1, \xi_2$ in $\mathcal E$ satisfy
$[\xi_i,ad^*_x] = ad^*_{\alpha_i(x)},$ $i=1,2,$
for all $x$ in $\mathcal G$ and some $\alpha_1,\alpha_2$ in $\der(\G)$,
then their Lie bracket $[\xi_1, \xi_2]$ is in $\mathcal E$ and satisfies
$[[\xi_1, \xi_2],ad^*_x] = ad^*_{[\alpha_1,\alpha_2](x)}.$
\elem
\proof
Jacobi's identity in the Lie algebra  $\mathfrak{gl}(\mathcal G^*)$ of endomorphisms
of the vector space $\G^*$, gives,  for any $x$ in $\G$
\beqn
[[\xi_1, \xi_2],ad^*_x]&=& [[\xi_1,ad^*_x], \xi_2] +  [\xi_1, [\xi_2,ad^*_x]]
 = [ad^*_{\alpha_1(x)}, \xi_2] + [\xi_1, ad^*_{\alpha_2(x)}]
 \nonumber\\
 &=& - ad^*_{\alpha_2\circ\alpha_1(x)} + ad^*_{\alpha_1\circ\alpha_2(x)}
 = ad^*_{[\alpha_1,\alpha_2](x)}.\nonumber
~~~~~~~~~~~~~~~\qed \eeqn
\blem \label{structure2}
The space $\G_0,$ as in Remark \ref{notations}, is a Lie subalgebra of
$\der(\D)$.
\elem
\proof This is a consequence of Lemma \ref{structure1}. If $\phi_1:= (\alpha_1, \xi_1)$ and 
$\phi_2:= (\alpha_2, \xi_2)$ are in $\G_0,$ then $[\phi_1,\phi_2]=([\alpha_1,\alpha_2], [\xi_1,\xi_2]).$ 
Indeed, for every $(x,f)\in\D,$ we have
\beqn
 [\phi_1,\phi_2](x,f) & = & \phi_1 \Big( \alpha_2(x),\xi_2(f) \Big) - \phi_2 \Big( \alpha_1(x),\xi_1(f) \Big)
  \nonumber\\
  & = & \Big( \alpha_1\circ\alpha_2(x)~,~ \xi_1\circ\xi_2(f)\Big) - \Big( \alpha_2\circ\alpha_1(x)~,~ \xi_2\circ\xi_1(f) \Big)\nonumber\\
  &=& \Big( [\alpha_1,\alpha_2](x), [\xi_1,\xi_2](f) \Big). \nonumber
~~~~~~~~~~~~~~~~~~~~~~~~~~~~~~~~~~~~~\hfill \qed 
\eeqn
\blem \label{structure3}
Let $\beta\in\mathcal Q$ and $\psi\in\Psi.$ Then
$[\beta,\psi]=(-\psi\circ\beta,\beta\circ\psi)$ belongs to $\G_0$. More precisely
$\beta\circ\psi$ is in $\mathcal E $, $\psi\circ\beta$ is in $\der(\mathcal G)$ and
$[\beta\circ\psi,ad_x^*]=-ad^*_{\psi\circ \beta(x)}$, for any $x$ in $\G$.
\elem
\begin{proof}
First, $\beta$ being a 1-cocycle is equivalent to
\beq
\label{eq:cocycle}
\beta\circ ad_x (y) = ad^*_x \circ \beta (y) -
ad^*_y\circ \beta (x),
\eeq
for all $x,y$ in $\G$. Now for every  $x$ in $\G$ and every $f$ in $\G^*,$ we have
\beqn
[\beta\circ\psi ,ad_x^*](f) &= & \beta\circ\psi \circ ad_x^*(f)-
ad_x^*\circ\beta\circ\psi (f)\nonumber \\
& = & \beta\circ ad_x\circ \psi (f) - ad_x^*\circ\beta\circ\psi(f), ~\text{now take $y=\psi (f)$ in (\ref{eq:cocycle})}\nonumber\\
& = & ad_{x}^*\circ\beta\circ  \psi (f) - ad_{\psi(f)}^*\beta(x) - ad_{x}^*\circ\beta\circ\psi(f),
              \nonumber   \\
& = &  - ad_{\psi(f)}^*\beta(x),~ \text{take $g=\beta(x)$ in
 (\ref{relation-commutation})} \nonumber\\
& = &  - ad_{\psi\circ \beta(x)}^*f = ad_{\alpha(x)}^*f, ~\text{where}~ \alpha= - \psi\circ \beta.\nonumber
\eeqn
Next, the proof that $\psi\circ\beta$ is in $\der(\G)$, is
straightforward. Indeed,  we have
\beqn
\psi\circ\beta [x,y] & = & \psi\Big(ad^*_x\beta (y) - ad^*_y\beta(x)\Big)
                      =  ad_x\circ\psi\circ \beta (y) - ad_y\circ\psi\circ\beta (x)\nonumber\\
                     & = & [x,\psi\circ \beta (y)] +[\psi\circ\beta(x),y],\nonumber
\eeqn
for all vectors $x,y$ in $\G$. Hence $[\beta,\psi]$ belongs to $\G_0,$ for any $\beta$ in $\mathcal Q$  and any  $\psi$ in $\Psi$.
\end{proof}
\blem
Let $\phi:=(\alpha,\xi)$ be in $\G_0$, $\beta: \mathcal G\to \mathcal G^*$  and
$\psi: \mathcal G^* \to \mathcal G$ be respectively in $\mathcal Q$ and $\Psi.$
Then both $[\phi,\beta]$ and $[\phi,\psi]$ are elements of $\G_1$. More precisely
$[\phi,\beta]$ is in $\mathcal Q$ and $[\phi,\psi]$ is in $\Psi.$
Moreover, we have $[\mathcal Q,\mathcal Q]=0$ and $[\Psi,\Psi]=0.$
\elem
\begin{proof}
Let $\phi=(\alpha,\xi)$ be in $\G_0,$ $\beta:\G\to\G^*$ a 1-cocycle and $\psi:\G^*\to\G$ an equivariant linear map. 
Using $\phi_{\beta}$  and $\phi_{\psi}$  as  in Remark \ref{embeddings}, we obtain
\beqn
[\phi,\phi_{\beta}](x,y)&=& \phi\Big(0,\beta(x)\Big)- \phi_{\beta}\Big(\alpha(x),\xi(f)\Big)
= \Big(0,\xi\circ \beta(x)\Big)- \Big(0,\beta\circ\alpha(x)\Big)\nonumber\\
&=& \Big( 0~,~(\xi\circ \beta-\beta\circ\alpha)(x) \Big).\nonumber
\eeqn
Now, let us show that $\tilde\beta\!:=\!\xi\circ \beta\!-\!\beta\circ\alpha\! :\! \G\!\to\!\G^*$ is a $1$-cocycle.
On the one hand, we have 
\beqn\label{eq:beta1}
\xi\circ \beta([x,y])&=&\xi\Big(ad_x^*\beta(y)-ad_y^*\beta(x)\Big)\nonumber\\
&=&\Big([\xi,ad_x^*]+ad^*_x\circ\xi\Big)\big(\beta(y)\big)-\Big([\xi,ad_y^*]
+ad^*_y\circ\xi\Big)\big(\beta(x)\big)\nonumber\\
&=&ad_{\alpha(x)}^*\beta(y)+ad^*_x(\xi\circ\beta(y))-ad_{\alpha(y)}^*\beta(x)-ad^*_y(\xi\circ\beta(x))\nonumber\\
&=&ad^*_x(\xi\circ\beta(y))-ad^*_y(\xi\circ\beta(x))+ad_{\alpha(x)}^*\beta(y)-ad_{\alpha(y)}^*\beta(x).
\eeqn
for all $x,y$ in $\mathcal G$. On the other hand, we also have: for any $x,y$ in $\mathcal G$,
\beqn \label{eq:beta2}
\beta\circ\alpha([x,y])&=& \beta\big([\alpha(x),y]\big)+\beta\big([x,\alpha(y)]\big)\nonumber\\
                       &=& ad_{\alpha(x)}^*\beta(y)-ad^*_y(\beta\circ\alpha(x))+ad^*_x(\beta\circ\alpha(y)) -ad^*_{\alpha(y)} \beta(x).
\eeqn
Subtracting (\ref{eq:beta2}) from (\ref{eq:beta1}), we see that $\tilde\beta[x,y]$ now reads
\beqn \label{eq:beta3}
\tilde\beta[x,y]=  ad^*_x(\xi\circ \beta-\beta\circ\alpha)(y) - ad^*_y(\xi\circ \beta-\beta\circ\alpha)(x)
= ad^*_x\tilde\beta(y)-ad^*_y\tilde\beta(x). \nonumber
\eeqn
Hence $\tilde\beta$ is an element of $\mathcal Q.$ In the same way, we also have: for any $(x,f)$ in $\mathcal D$,
\beqn 
[\phi,\phi_{\psi}](x,f)&=& \phi\big(\psi(f),0\big)- \phi_{\psi}\big(\alpha(x),\xi(f)\big)\nonumber\\
&=& \big(\alpha\circ\psi(f),0\big)- \big(\psi\circ\xi(f),0\big)
= \big( (\alpha\circ\psi- \psi\circ\xi)(f)~,~0 \big). \nonumber
\eeqn
The linear map $\tilde\psi:=\alpha\circ\psi- \psi\circ\xi:\G^*\to\G$ is equivariant,
{\it i.e.} is an element of $\Psi.$ As above, this is seen by first computing, for every elements $x$ of $\G$ and $f$ of $\G^*$,
\beqn \label{eq:psi1}
\alpha\circ\psi(ad_x^*f)=\alpha\big([x,\psi(f)]\big) = [\alpha(x),\psi(f)]+[x,\alpha\circ\psi(f)]
\eeqn
and
\beqn \label{eq:psi2} \psi\circ\xi(ad_x^*f)&=&\psi\circ\big([\xi,ad_x^*]+ad_x^*\circ\xi\big)(f)
=  \psi\big( ad^*_{\alpha(x)}f\big)+\psi\big(ad_x^*\xi(f)\big)\nonumber\\
&=&  \big[\alpha(x),\psi(f)\big]+\big[x,\psi\circ\xi(f)\big],
\eeqn
then subtracting (\ref{eq:psi1}) and (\ref{eq:psi2}).
We have  
$[\phi_{\beta},\phi_{\beta'}](x,f)=\phi_{\beta}(0,\beta'(x))-\phi_{\beta'}(0,\beta(x))=0$ 
and
$[\phi_{\psi},\phi_{\psi'}](x,f)=\phi_{\psi}(\psi'(f),0)-\phi_{\psi'}(\psi(f),0)=0$,
for all $(x,f)$ in $\D.$ In other words, $[\mathcal Q,\mathcal Q]=0$ and $[\Psi,\Psi]=0.$
\end{proof}
We summarize all the above in the
\begin{theorem}\label{structuretheorem}
Let $G$ be a Lie group and $\G$ its Lie algebra. The group Aut($\D$) of
automorphisms of the Lie algebra $\D$ of the cotangent bundle $T^*G$
of $G$, is a super symmetric Lie group. More precisely, its Lie algebra
$\der(\mathcal D)$ is a $\mathbb Z/2\mathbb Z$-graded symmetric (supersymmetric) Lie algebra which 
decomposes into a direct sum of vector spaces
\beq
 \der(\mathcal D):=\mathcal G_0\oplus\mathcal G_1,  \text{~
 with~ } [\mathcal G_i,\mathcal G_j]\subset\mathcal
 G_{i+j}, ~~ i,j \in \mathbb Z/2\mathbb Z = \{0, 1\},
\eeq
where $\mathcal G_0$ and $\mathcal G_1$ are as in Remark \ref{notations}.

Consider the Abelian subalgebras $\tilde \G_1:=\mathcal Q$
and $\tilde \G_1':=\Psi$   of $\der(\D)$ and set $deg(x)=i$, if $x\in\G_i,$ i=1,2.
Then, the  spaces $\G_0\oplus\tilde \G_1$ and $\G_0\oplus\tilde \G_1'$ are subalgebras of \der($\D$)
which are Lie superalgebras, i.e. they are $\mathbb Z/2\mathbb Z$-graded Lie algebras with
the Lie bracket satisfying $[x,y]=-(-1)^{deg(x)deg(y)}[y,x]$ and $[x,[y,z]]=[[x,y],z]+(-1)^{deg(x)deg(y)}[y,[x,z]].$  
\end{theorem}
As a straightforward corollary, we have the following
\begin{proposition}\label{superalgebras} Let $G$ be a
Lie group, $\G$ its Lie algebra, $T^*G$ its cotangent bundle and $\D:=\G\ltimes \G^*$ the Lie algebra of $T^*G$.
Denote by $\Psi$ the space of linear maps $\psi:\G^*\to \G$ satisfying 
$
\psi \circ ad^*_x = ad_x \circ \psi,$ $ \forall \; x \in \mathcal G $ and $
ad^*_{\psi(f)}g = ad^*_{\psi(g)}f, $ $\forall \; f,g \in \mathcal G^*.$
(a)  If $\Psi=\{0\},$ then $\der(\D)$ is a  Lie superalgebra. (b) In particular, if $\G$ is orthogonal with trivial center, e.g. if $\G$ is semi-simple, then $\der(\D)$ is a Lie superalgebra.
\end{proposition}
\proof
(a) With the same notations as in Theorem \ref{structuretheorem}, if $\Psi=\{0\},$  then $\der(\D)=\G_0\oplus\tilde\G_1.$
(b) In this case  $\Psi=\{0\},$ see Proposition \ref{coro:liesuperalgebra} and Section \ref{chap:semi-simple}.\qed

\brmq (a)
In Propositions \ref{decomposition-xi} and \ref{decomposition-xi'}, we will prove that every
element $\xi$ of $\mathcal E$ is the transpose $\xi=(j-\alpha)^t$ of the sum of  an adjoint-invariant endomorphism  $j\in\mathcal J$  
and a derivation $-\alpha$ of $\G.$
~(b) The Lie superalgebras $\G_0\oplus\tilde \G_1$ and $\G_0\oplus\tilde \G_1'$ respectively correspond to the 
subalgebras of all elements of \der($\D$) which preserve the subalgebra $\G$ and the ideal $\G^*$ of $\D$. 
The Lie superalgebra $\G_0\oplus\tilde \G_1'$ can be seen as part of the more general case of derivations of a semi-direct product Lie algebra $\G\ltimes \mathcal N$ which preserve the ideal $\mathcal N$ and which are discussed in \cite{neeb}, amongst other results therein.
\ermq

\subsection{Maps $\xi$ and bi-invariant tensors of type (1,1)}
\subsubsection{Adjoint-invariant endomorphisms}\label{chap:bi-inv-endom}
Linear operators acting on vector fields of a Lie group $G$
can be seen as fields of endomorphisms of its tangent spaces.
Bi-invariant ones correspond to endomorphisms $j:\G \to \G$ of  the
Lie algebra $\G$ of $G$, satisfying $j[x,y]=[jx,y]$, 
for all $x,y$ in $\mathcal G$. We sometimes call such $j$ bi-invariant endomorphisms.
 If we denote by $\nabla$ the connection
on $G$ given on left invariant vector fields by
$\nabla_xy:=\frac{1}{2}[x,y]$, then using the covariant derivative,
we have  $\nabla j=0$, (see e.g. \cite{tondeur}).
As above, let $\mathcal J$ be the space of such $j.$
 Endowed with the bracket $[j,j']:=j\circ j'-j'\circ j,$
the space $\mathcal J$ is a Lie algebra.
 If the dimension of $G$ is even and if in addition $j$ satisfies
$j^2\!\!=~\!\!\!-\!identity$, then $(G,j)$ is a complex Lie group.
Of course, the identity map $id_{\mathcal G}$ of $\G,$  is an element of $\mathcal J.$ We have $[\phi_j,\phi_{id}] =0,$
$[\phi_{\alpha},\phi_{id}] =0,$
$[\phi_{id},\phi_{\psi}] =-\phi_{\psi}$
 and  $[\phi_{id},\phi_{\beta}] = \phi_{\beta},$
 for every $\alpha\in\der(\mathcal G),$ $\psi\in\Psi,$ $\beta\in\mathcal Q$ and $j\in \mathcal J.$ 
 Hence, the adjoint of $\phi_{id}$ is diagonal, with matrix $diag(0,1,-1)$ in the decomposition 
 $\der(\D)=\G_0\oplus\mathcal Q\oplus\Psi.$ Thus trace($ad_{\phi_{id}}$) = $\dim(\mathcal Q)-\dim(\Psi).$  Proposition \ref{prop:P1} (b) is proved. As $\mathcal Q$ contains at least coboundaries  $\beta(x)=ad^*_xf_0$ 
 for some 1-form $f_0$, hence $\mathcal Q\neq 0,$  therefore $ad_{\phi_{id}}$ and thereby $der(T^*\mathcal G)$ 
 are never nilpotent. This proves Proposition \ref{prop:P1} (c).
  Note that, $\Psi=[\phi_{id},\Psi]$ and $\mathcal Q=[\phi_{id},\mathcal Q]$  are subsets of $[\der(\D),\der(\D)],$ hence $ad_{\phi_{\psi}}$ and $ad_{\phi_{\beta}}$ are traceless, $\forall \psi\in\Psi,$ $\forall \beta\in\mathcal Q$.

\subsubsection{ Maps $\xi:\G^*\to\G^*$}
\bpro\label{decomposition-xi} Let $\mathcal G$ be a Lie algebra and $\alpha$ a derivation of
$\mathcal G$. A linear map $\xi':\mathcal G \to \mathcal G$ satisfies
$[\xi',ad_x] = ad_{\alpha(x)}$, for every element $x$ of $\G$, if and only if there
exists a linear map $j:\G\to\G$ satisfying
\beq\label{ameth}
j([x,y]) = [j(x),y] = [x,j(y)],
\eeq
for all $x,y$ in $\G$, such that $\xi'= j + \alpha$.
\epro
\begin{proof}
 Let $\alpha$ be a derivation and $\xi'$ an
endomorphism of $\G$ satisfying the hypothesis of Proposition
\ref{decomposition-xi}, that is,
$[\xi',ad_x] = ad_{\alpha(x)} = [\alpha,ad_x]$, for any $x$ in $\G$.
 We then have,
\beq\label{aby}
[\xi'-\alpha,ad_x] = 0,
\eeq
for any $x$ of $\G$. So the endomorphism $j:=\xi'-\alpha$ commutes with all adjoint operators.
Now a linear map $j:\G\to\G$ commuting with all adjoint operators, satisfies: for all elements $x,y$ of $\G$,
$0=[j,ad_x](y) = j([x,y]) - [x,j(y)]$.
 We also have that for all $x,y$ in $\G$,
$0=[j,ad_y](x) = j([y,x]) - [y,j(x)]$.
Hence, $j([x,y]) = [j(x),y] = [x,j(y)]$, for any $x,y$ in $\G$.
Thus, (\ref{aby}) is equivalent to $\xi'= j +\alpha$, where
$j$ satisfies (\ref{ameth}).
\end{proof}
\bpro \label{decomposition-xi'} Let $\G$ be a nonabelian Lie algebra and $\mathcal S$
the space of endomorphisms  $\xi': \G\to\G$ such that there exists a derivation
$\alpha$ of $\G$ and  $[\xi', ad_x]=ad_{\alpha (x)}$ for  all $x\in \mathcal G$.
1) $ \mathcal S$ is a Lie algebra containing $\mathcal J$ and $\der(\G)$ as subalgebras.
In the case where $\G$ has a trivial centre, then $ \mathcal S$ is  the
semi-direct product $\mathcal S=\der(\G)\ltimes \mathcal J$ of
 $\mathcal J$ and $\der(\G)$.\\
2) The following are equivalent: ~
(a) The linear map $\xi:\G^*\to\G^*$ is an element of  $\mathcal E$ with $\alpha$  as the corresponding derivation 
of $\G$, i.e. $\xi$ satisfies (\ref{relation-xi-alpha}) for the derivation $\alpha$.
~
(b) The transpose $\xi^t$  of $\xi$ is of the form $\xi^t=j-\alpha,$ where $j$ is in $\mathcal J$ and $\alpha$ in $\der(\G)$.
~
(c) $\xi^t$ is an element of $\mathcal S,$ with corresponding derivation $-\alpha.$
The transposition $\xi\mapsto \xi^t$ of linear maps is an anti-isomorphism between the Lie algebras $\mathcal E$ and $\mathcal S.$
\epro
\begin{proof}
1) Using the same argument as in Lemma \ref{structure1}, if for all $x$ in $\G,$ $[\xi'_1, ad_x]=ad_{\alpha_1 (x)}$ 
and $[\xi'_2, ad_x]=ad_{\alpha_2 (x)}$, 
 then $[[\xi'_1,\xi_2'], ad_x]=ad_{[\alpha_1, \alpha_2](x)}$, for any $x$ in $\G$. 
Thus $\mathcal S$ is a Lie algebra. From Proposition \ref{decomposition-xi}, 
there exist $j_i$ in $\mathcal J$ such that $\xi'_i=\alpha_i+j_i ,$ $i=1,2.$
Obviously, $\mathcal S$ contains $\mathcal J$ and $\der(\G).$
Thus, as a vector space, $\mathcal S$ decomposes as $\mathcal S= \der(\G) +\mathcal J.$
Now, the Lie bracket in $\mathcal S$ reads
\beq \label{eq:liebraket1-S}
[\xi'_1,\xi'_2]= [\alpha_1+j_1, \alpha_2+j_2]= [\alpha_1, \alpha_2] +[\alpha_1,j_2]+[j_1, \alpha_2]+[j_1,j_2].
\eeq
Of course, $[\alpha_1, \alpha_2]$ is in $\der(\G)$. From Section \ref{chap:bi-inv-endom}, we know that $\mathcal J$ is a Lie algebra, 
hence $[j_1,j_2]$ is in $\mathcal J$. It is easy to check that
$[\alpha,j]\in \mathcal J,$
for all $\alpha$ in $\der(\G)$ and for all $j$ in $\mathcal J$. Indeed, the following holds
\beqn
[\alpha,j]([x,y])&=& \alpha \big([j(x),y]\big) - j\big([\alpha(x),y]+[x,\alpha(y)]\big)\nonumber \\
& = &  [\alpha\circ j(x),y]+ [j(x),\alpha(y)] - [j\circ \alpha(x),y]-[j(x),\alpha(y)]
=  [[\alpha,j](x),y],\nonumber
\eeqn
for all $x,y$ in $\G$.
The intersection $\der(\G)\cap \mathcal J$ is made of elements $j$ of $\mathcal J$ whose image Im($j$) is a subset of 
the centre $Z(\G)$ of $\G.$
Hence if $Z(\G)=0$, then $\mathcal S=$\der($\G$)$ \oplus\mathcal J$ and as a Lie algebra,   
$\mathcal S= $ $\der(\G)\ltimes \mathcal J.$ Using this  decomposition, we can also rewrite (\ref{eq:liebraket1-S}) as
\beq \label{eq:liebraket2-S}
[\xi'_1,\xi'_2]= \left[(\alpha_1,j_1)~,~ (\alpha_2,j_2)\right]= \left([\alpha_1, \alpha_2]~,~    [j_1,j_2] +[\alpha_1,j_2]+[j_1, \alpha_2]\right).
\eeq
The equivalence between (b) and (c) comes directly from Proposition \ref{decomposition-xi}.
 Now let $\xi\in\mathcal E$, with $[\xi, ad^*_x] = ad^*_{\alpha (x)}$, $\alpha \in \der(\G)$, then
$- ad_{\alpha (x)} = [\xi, ad^*_x]^t = - [\xi^t, (ad^*_x)^t] = [\xi^t, ad_x]$. 
Hence $\xi^t\in \mathcal S$, with $ad_{\alpha' (x)}  = [\xi^t, ad_x]$, for all $x\in \G$,
where $\alpha':=-\alpha$. Thus, (a) implies (c). From Proposition \ref{decomposition-xi}, there exists $j\in\mathcal J$ 
such that $\xi^t=-\alpha+j.$ Now it is straightforward that if  (b) $\xi^t=-\alpha+j$ with $\alpha$ in $\der(\G)$ and $j$ in $\mathcal J$, 
then $\xi$ satisfies $[\xi,ad^*_x]=ad^*_{\alpha(x)}$, for all $x$ in $\G$. Hence (c) implies (a).
Of course, we also know that $[\xi_1,\xi_2]^t= - [\xi_1^t,\xi_2^t]$, for every $\xi_1,\xi_2\in\mathcal E$.
\end{proof}
\begin{lemma}\label{lem:decomp-semi-simpl} Let $\xi': \mathcal G \to \mathcal G$ be a linear map
such that there exists $\alpha : \mathcal G \to \mathcal G$ linear and $[\xi',ad_x]=ad_{\alpha(x)}$,
for all $x$ in $\mathcal G$. Then $\xi'$ preserves every ideal $\mathcal A$ of $\mathcal G$ satisfying
$[\mathcal A,\mathcal A]=\mathcal A.$  In particular, if $\mathcal G$ is semi-simple and
$ \G = \s_1\oplus\s_2\oplus\cdots \oplus \s_p$ is a decomposition of $\mathcal G$ into a sum of simple ideals $\s_1,\ldots , \s_p$, 
then $\xi'(\s_i)\subset \s_i$, for $i=  1,\ldots, p.$
\end{lemma}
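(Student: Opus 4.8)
The plan is to convert the operator identity $[\xi',ad_x]=ad_{\alpha(x)}$ into a pointwise bracket relation and then let the ideal property of $\mathcal A$ do all the work, using perfectness $[\mathcal A,\mathcal A]=\mathcal A$ to reduce the verification to brackets alone.

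First I would evaluate the hypothesis on an arbitrary $y\in\G$. Since $[\xi',ad_x](y)=\xi'([x,y])-[x,\xi'(y)]$ while $ad_{\alpha(x)}(y)=[\alpha(x),y]$, the condition $[\xi',ad_x]=ad_{\alpha(x)}$ is exactly the identity
\[
\xi'([x,y]) = [x,\xi'(y)] + [\alpha(x),y], \qquad \forall\, x,y\in\G.
\]
Note that I never need $\alpha$ to be a derivation; only its linearity is used.

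Next, given an ideal $\mathcal A$ with $[\mathcal A,\mathcal A]=\mathcal A$, I would observe that $\mathcal A$ is spanned as a vector space by brackets $[a_1,a_2]$ with $a_1,a_2\in\mathcal A$, so by linearity it suffices to show $\xi'([a_1,a_2])\in\mathcal A$. Substituting $x=a_1$, $y=a_2$ into the identity gives $\xi'([a_1,a_2])=[a_1,\xi'(a_2)]+[\alpha(a_1),a_2]$. The first summand lies in $\mathcal A$ because $a_1\in\mathcal A$ and $\mathcal A$ is an ideal, and the second lies in $\mathcal A$ because $a_2\in\mathcal A$ and $\mathcal A$ is an ideal; hence the sum is in $\mathcal A$, and therefore $\xi'(\mathcal A)\subseteq\mathcal A$.

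For the semisimple case I would simply note that each simple ideal $\s_i$ is non-abelian, so $[\s_i,\s_i]$ is a non-zero ideal of $\s_i$ and must equal $\s_i$ by simplicity; applying the first part with $\mathcal A=\s_i$ yields $\xi'(\s_i)\subseteq\s_i$ for every $i$. There is no genuine obstacle here: the whole content is the remark that in $\xi'([a_1,a_2])=[a_1,\xi'(a_2)]+[\alpha(a_1),a_2]$ each summand is automatically trapped in $\mathcal A$ — one through its left entry, the other through its right — so that the perfectness hypothesis is precisely what makes these brackets exhaust $\mathcal A$.
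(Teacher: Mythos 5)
Your proof is correct and follows essentially the same route as the paper's: both convert the operator identity into the pointwise relation $\xi'([x,y])=[x,\xi'(y)]+[\alpha(x),y]$, write elements of the perfect ideal $\mathcal A$ as sums of brackets of elements of $\mathcal A$, and use the ideal property to see that each of the two resulting summands lies in $\mathcal A$. Your explicit remark that each simple ideal $\s_i$ is perfect (being non-abelian and simple) just spells out the ``in particular'' step the paper leaves implicit.
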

\begin{proof}
The proof is straightforward. Indeed, every element $x$ of an ideal $\mathcal A$ satisfying the hypothesis of 
Lemma \ref{lem:decomp-semi-simpl}, is a finite sum of the form $x=\displaystyle \sum_i [x_i,y_i]$ where $x_i,y_i$ 
are all elements of $\mathcal A$.  But as $\mathcal A$ is an ideal,
\beqn
\xi' ([x_i,y_i])&=&\xi' \circ ad_{x_i} (y_i)=\big([\xi',ad_{x_i}] + ad_{x_i}\circ \xi' \big)(y_i)\nonumber\\
&=&\big(ad_{\alpha(x_i)} + ad_{x_i}\circ \xi'\big)(y_i) =[\alpha(x_i),y_i] +
[x_i, \xi'(y_i)]\nonumber
\eeqn
is again an element of $\mathcal A$. Hence we have $\xi' (x) = \displaystyle \sum_i \big([\alpha(x_i),y_i] +
[x_i, \xi'(y_i)]\big)$ is in $\mathcal A$.
\end{proof}

\subsection{Equivariant maps $\psi: \G^*\to\G $}\label{chap:equivariance}
\blem\label{lem:intertwining}
Let $\G$ be a Lie algebra and $\psi$ an element of $\Psi$.
Then, \\
(a) $\hbox{\rm Im} \psi$ is an Abelian ideal of $\G$ and we have $\psi(ad_{\psi (g)}^*f)=0$,
for every $f,g$ in $\G^*$; \\
(b) $\psi$ sends closed forms on $\G$ in the centre of $\G$;\\
(c) $[Im \psi,\G] \subset \ker f$, for all $f$ in $\ker \psi$;\\
(d) the map $\psi$ cannot be invertible if $\G$ is not Abelian.
\elem
\begin{proof}
(a) We have, 
$[\psi(f),x]  = -(ad_x \circ \psi)(f) =  -(\psi \circ ad_x^* )(f) \in Im \psi$,
for every elements $f$ in $\G^*$ and $x$ in $\G$. Hence $Im \psi$ is an ideal of $\G$.
 Now, for every $f,g$ in $\G^*$, since $\psi(f)$ and $\psi(g)$ are elements of $\G$,  we also have
$\psi \circ ad^*_{\psi(f)} = ad_{\psi(f)} \circ \psi$ and
$\psi \circ ad^*_{\psi(g)} = ad_{\psi(g)} \circ \psi$. On the one hand,
$(\psi \circ ad^*_{\psi(f)})(g) =  (ad_{\psi(f)} \circ \psi)(g)$, that is,
$\psi(ad^*_{\psi(f)}g) = [\psi(f),\psi(g)]$. 
On the other hand, $(\psi \circ ad^*_{\psi(g)})(f) =  (ad_{\psi(g)} \circ \psi)(f)$ or equivalently
$\psi(ad^*_{\psi(g)}f) = [\psi(g),\psi(f)]$. 
From the latter equations, we get
$[\psi(f),\psi(g)]\!=\!\psi(ad^*_{\psi(f)}g)\!=\!\psi(ad^*_{\psi(g)}f)\! =\! [\psi(g),\psi(f)]$.
The latter implies that
$[\psi(f),\psi(g)]\!=\!\psi(ad^*_{\psi(g)}f)\!\! =\! 0,$ 
 for all   $f,g$ in $\G^*$. So  (a) is proved.

 (b) Let $f$ be a closed form on $\G$, that is, $f$ is in $\G^*$ and $ad_x^*f=0$, for all $x$ in $\G$.
The relation (\ref{relation-commutation}) implies that $ad^*_{\psi(f)}g=0 $,
for any $g$ in $\G^*$. Thus, for any $y$ in $\G$ and  $g$ in $\G^*$, we have 
$g([\psi(f),y])=0, $
 and hence $[\psi(f),y]=0$. Thus $\psi(f)$ belongs to the centre of $\G$.

(c) If $f \in \ker \psi$, then $ad^*_{\psi(g)}f = ad^*_{\psi(f)}g=0$, for any $g$ in $\G^*$,
or equivalently, for any $x$ in $\G$ and $g$ in $\G^*$, $f([\psi(g),x])=0.$
Then $[Im \psi,\G] \subset \ker f$, for every  $f$ of $\ker \psi$.
 
(d) From (a), the map $\psi$ satisfies $\psi(ad^*_{\psi(g)}f)= 0$, for any $f,g$ in $\G^*$.
 There are two possibilities here: ~
 (i) either there exist $ f,g$  in $\G^*$ such that $ad^*_{\psi(g)}f\neq 0$, in which case $ad^*_{\psi(g)}f$ 
 belongs to $\ker\psi\ne 0$ and thus  $\psi$ is not invertible;
~(ii) or else, suppose $ad^*_{\psi(g)}f = 0$, for all $f,g$ in $\G^*$.
This implies that $\psi(g)$ belongs to the centre of $\G$ for every
$g$ in $\G^*$. In other words, the centre of $\G$ contains $Im \psi$.
But since $\G$ is not Abelian, the centre of $\G$ is different from $\G$,
hence $\psi$ is not invertible.
\end{proof}
\blem\label{lem:coad-inv}
The space of equivariant maps $\psi:\G^* \to \G$ bijectively corresponds to that of
$\G$-invariant bilinear forms on the $\G$-module $\G^*$ for the co-adjoint representation.
\elem
\begin{proof}
Indeed, each such $\psi$ defines a unique co-adjoint-invariant bilinear form $\langle,\rangle_{\psi}$ on $\G^*$ as follows
$\langle f,g\rangle_{\psi}:= \langle \psi (f), g\rangle,$
for all $f,g$ in $\G^*$, where the right hand side is the duality pairing
$\langle f,x \rangle = f(x)$, $x$ in $\G$, $f$ in $\G^*,$ as above.
The co-adjoint-invariance reads
$\langle ad_x^* f,g\rangle_{\psi}+ \langle f,ad_x^* g \rangle_{\psi}= 0,$
for all $x$ in $\G$ and all $f,g$ in $\G^*$; and is due to the simple equalities
$\langle ad_x ^*f,g \rangle_{\psi} = \langle \psi(ad_x^*f),
g\rangle = \langle ad_x\psi(f),g \rangle$
 $=- \langle \psi(f),ad_x^*g \rangle=-\langle f,ad_x^*g\rangle_{\psi}. $ 
Conversely, every $\G$-invariant bilinear form $\langle,\rangle_1$ on $\G^*$ gives rise to a unique
linear map $\psi_1: \G^*\to \G$ which is equivariant with respect to the adjoint and co-adjoint representations of $\G$, by the formula
$\langle\psi_1(f),g\rangle:=\langle f,g\rangle_1.$
\end{proof}
If $\psi$ is symmetric or skew-symmetric with right to the duality pairing, then so is $\langle,\rangle_\psi$ and vice versa. Otherwise,
$\langle,\rangle_\psi$ can be decomposed into a symmetric and a skew-symmetric parts $\langle , \rangle_{\psi,s}$ and  $\langle ,
\rangle_{\psi,a}$ respectively, defined by the following formulas:
\beqn
\langle f, g \rangle_{\psi,s} :=\frac{1}{2}\Big[\langle f,g\rangle_\psi +\langle g, f\rangle_\psi \Big], ~ ~
\langle f, g \rangle_{\psi,a} := \frac{1}{2}\Big[ \langle f, g \rangle_\psi - \langle g, f\rangle_\psi \Big].
\eeqn
The symmetric and skew-symmetric parts $\langle ,  \rangle_{1,s}$ and $\langle , \rangle_{1,a}$ of a $\G$-invariant bilinear form 
$\langle ,\rangle_{1}$, are also $\G$-invariant. 
The radical $Rad\langle ,  \rangle_{1}:=\{ f\in \G^*, \langle f,g  \rangle_{1}=0, \forall g\in \G^*\}$ of the form 
$\langle ,  \rangle_{1},$ contains the co-adjoint orbits of all its points (see a remark in \cite[p. 2297]{me-re93}).

\subsection{Cocycles $\beta: \G\to\G^*$} \label{chap:cocyles}
The 1-cocycles for the co-adjoint representation of a Lie algebra $\G$ over $\K$ are linear maps
$\beta:\G\to \G^*$ satisfying the condition
$\beta ([x,y])=ad_x^*\beta (y) - ad^*_y\beta (x)$, for every elements $x,y$ of $\G.$
To any given 1-cocycle $\beta$, corresponds a bilinear form $\Omega_\beta$ on $\G$, by the formula
\beq \label{eq:cocycle-2-forms}
\Omega_\beta (x,y):=\langle \beta (x),y\rangle,
\eeq
for all $x,y$ in $\G$, where $\langle,\rangle$ is again the duality pairing between
elements of $\G$ and $\G^*$.

The bilinear form $\Omega_\beta$  is skew-symmetric (resp. symmetric, non-degenerate)
if and only if $\beta$ is skew-symmetric (resp. symmetric, invertible).
Skew-symmetric such cocycles $\beta$ are in bijective
correspondence with closed 2-forms in $\mathcal G$, via the formula (\ref{eq:cocycle-2-forms}). In this sense, 
the cohomology space $H^1(\D,\D)$  contains the second cohomology space $H^2(\G,\K)$ of $\G$ with coefficients in $\K$ 
for the trivial action of $\G$ on $\K$. Hence, $H^1(\D,\D)$ somehow contains the second space $H^2_{inv}(G,\K)$ 
of left invariant de Rham cohomology  $H^*_{inv}(G,\K)$ of any Lie group $G$ with Lie algebra $\G.$
Invertible skew-symmetric ones, when they exist, are those giving rise to symplectic forms or
equivalently to invertible solutions of the Classical Yang-Baxter
Equation. The study and classification of the solutions of the Classical Yang-Baxter Equation is a still open 
problem in Geometry, Theory of integrable systems. In Geometry, they give rise to very interesting structures in the framework 
of Symplectic Geometry, Affine Geometry, Theory of
Homogeneous K\"ahler domains,  (see e.g. \cite{di-me-cybe} and references therein).

If $\G$ is semi-simple, then every cocycle $\beta$ is a coboundary, that is, there exists
$f_\beta$ in $\G^*$ such that $\beta (x)= -ad^*_xf_\beta$, for any $x$ in $\G.$

\subsection{Cohomology space $H^1(\D,\D)$} \label{chap:cohomology}
In  Theorem \ref{thm:specialcase} below, we show that the first cohomology space $H^1(\D,\D)$ of the Chevalley-Eilenberg cohomology 
associated with the adjoint action of $\D$ on itself, is isomorphic to 
$H^1(\G,\G)\oplus  \mathcal J^t\oplus H^1(\G,\G^*)\oplus \Psi,$  where $H^1(\G,\G)$ and $H^1(\G,\G^*)$ 
are the first cohomology spaces associated with the adjoint and co-adjoint actions of $\G$, respectively; and
$\mathcal J^t:=\{j^t, j \in \mathcal J \}$ (space of transposes of elements of $\mathcal J$).

\begin{theorem}\label{thm:specialcase}
The isomorphism
\beq \label{eq:cohomology} \Phi: \der(\G)\oplus\mathcal J^t\oplus \mathcal Q\oplus \Psi\to \der(\D); ~~(\alpha,j^t,\beta,\psi)\mapsto 
\phi_\alpha+\phi_j+\phi_\beta+\phi_\psi,
\eeq
between the vector spaces  $\der(\G)\oplus\mathcal J^t\oplus \mathcal Q\oplus \Psi$ and $\der(\D)$, induces an isomorphism   
$\bar \Phi$ in cohomology, between the spaces $H^1(\G,\G)\oplus \mathcal J^t\oplus H^1(\G,\G^*)\oplus \Psi$ and $H^1(\D,\D).$
~(a) If $\G$ is semi-simple, then  $\Psi=\{0\}$ and thus $H^1(\D,\D) \stackrel{\sim}{=}  \mathcal J^t.$ Moreover if $\K=\C$, we have
$\mathcal J \stackrel{\sim}{=} \C^p$,
where $p$ is the number of simple
 ideals $\s_i$ of $\G$ such that $\G=\s_1\oplus\cdots\oplus \s_p$. Hence, of course, $H^1(\D,\D) \stackrel{\sim}{=}   \C^p.$
~(b) If $\G$ is a compact Lie algebra, with centre $Z(\G)$ of dimension $k$, we get $ H^1(\G,\G) \stackrel{\sim}{=} \hbox{\rm End}(Z(\G)),$ 
$ H^1(\G,\G^*) \stackrel{\sim}{=} L(Z(\G), Z(\G)^*),$ $\Psi\stackrel{\sim}{=} L(Z(\G)^*, Z(\G)).$ Hence,
we get $ H^1(\D,\D) \stackrel{\sim}{=} (\hbox{\rm End}(\K^k) )^3\oplus \mathcal J^t$.
Here, if $E,F$ are vector spaces, $L(E,F)$ is the space of linear maps $E\to F$ and $L(E,E):=\hbox{\rm End}(E).$ 
If $\K=\C,$ then $ \mathcal J  \stackrel{\sim}{=} \C^p\oplus \hbox{\rm End}(Z(\G)),$ and 
$ H^1(\D,\D) \stackrel{\sim}{=} (\hbox{\rm End}(\C^k) )^4\oplus \C^p,$ where  $p$ is the number of the simple
components (ideals) of the derived ideal $[\G,\G]$ of $\G$.
\end{theorem}
\begin{proof}
Following Remarks \ref{notations} and  \ref{embeddings}, we can embed $\der(\G)$ as a subalgebra $\der(\G)_1$ of $\der(\D)$, 
using the linear map $\alpha\mapsto \phi_\alpha,$ with $\phi_\alpha(x,f)=(\alpha(x),-f\circ\alpha)$. In the same way, we have 
constructed $\mathcal Q$ and $\Psi$ as subspaces of  $\der(\D).$ Likewise, $\mathcal J^t:=\{j^t,  j\in \mathcal J\}$
is seen as a subspace of $\der(\D),$ via the linear map $j^t\mapsto\phi_j$, with $\phi_j(x,f)=(0,f\circ j).$
Now an exact derivation of $\D$, {\it i.e.} a $1$-coboundary for the Chevalley-Eilenberg cohomology associated with the adjoint 
action of $\D$ on itself, is of the form $\phi_0=\partial v_0=ad_{v_0}$ for some element $v_0:=(x_0,f_0)$ of the $\D$-module $\D.$ 
That is, $\phi_0(x,f)=(\alpha_0(x),\beta_0(x)+\xi_0(f))$, where $\alpha_0(x)=[x_0,x]$, $\beta_0(x)=-ad^*_xf_0$,  
$\xi_0(f)=ad^*_{x_0}f$. As we can see $\phi_0=\phi_{\alpha_0}+\phi_{\beta_0}=\Phi(\alpha_0,0,\beta_0,0)$ and 
the linear map $\Phi$ in (\ref{eq:cohomology}) induces an isomorphism   $\bar \Phi$ in cohomology, 
between the spaces $H^1(\G,\G)\oplus \mathcal J^t\oplus H^1(\G,\G^*)\oplus \Psi$ and $H^1(\D,\D).$
The isomorphism in cohomology simply reads 
$\bar \Phi (class(\alpha), j^t,class(\beta),\psi)=class(\phi_\alpha+\phi_j+\phi_\beta+\phi_\psi).$

The proof of the rest of Theorem \ref{thm:specialcase} is given by different lemmas and propositions 
discussed in Section \ref{chap:orthogonal-algebra}.
\end{proof}
Let us remark that $\Phi (\der(\G)\oplus\mathcal J^t)= \der(\G)_1\oplus\mathcal J^t=\G_0$ 
and  $\Phi (\mathcal Q\oplus\Psi)=\G_1$.

\section{Case of orthogonal Lie algebras}\label{chap:orthogonal-algebra}
In this section, we prove that if a Lie algebra $\G$ is orthogonal, then $\der(\G)$ and $\mathcal J$  
completely characterize the Lie algebra $\der(\D)$, and hence the group of 
automorphisms of the cotangent bundle of any connected Lie group with Lie algebra $\G.$ We also show that $\mathcal J$ is
isomorphic to the space of adjoint-invariant bilinear forms on $\G.$

\subsection{General orthogonal Lie algebras}
Let $(\G,\mu)$ be an orthogonal Lie algebra and consider the isomorphism $\theta : \G \to \G^*$ of $\G$-modules,  
given by $\langle \theta(x),y \rangle := \mu(x,y)$, as in Section \ref{chap:preliminaries}.
Of course, $\theta^{-1}$ is an equivariant map. But if $\G$ is not Abelian, invertible equivariant linear maps do not 
contribute to the space of derivations of $\D,$ as discussed in Lemma~\ref{lem:intertwining}.
We pull co-adjoint-invariant bilinear forms $B'$ on $\G^*$ back to adjoint-invariant bilinear
forms on $\G,$ as follows $B(x,y):= B'(\theta(x),\theta(y))$. 
Indeed, we have, for all $x,y,z$ in $\G,$ ~\\
$ B([x,y],z)= B'(\theta([x,y]),\theta(z))= B'(ad^*_x\theta(y),\theta(z))$
$= - B'(\theta(y),ad^*_x\theta(z))=-B(y,[x,z])$. 
\bpro\label{prop:ortho1}
If $(\G,\mu)$ is an orthogonal Lie algebra, then there is an isomorphism between any two of the following vector spaces:~
 (a) the space $\mathcal J$ of linear maps $j:\G\to\G$ satisfying $j[x,y]=[jx,y]$, for every
 $x,y$ in $\G$; ~
 (b)  the space of linear maps $\psi:\G^*\to\G$ which are equivariant with respect to the co-adjoint and the adjoint representations of $\G$;
 ~
 (c) the space of bilinear forms $B$ on $\G$ which are adjoint-invariant, {\it i.e.}~ 
 $B([x,y],z)+B(y,[x,z])=0, $ ~
for all $ x,y,z$ in $\G$;
~
 (d) the space of bilinear forms $B'$ on $\G^*$ which are co-adjoint-invariant, {\it i.e.} ~
 $B'(ad^*_xf,g)+B'(f,ad^*_xg)=0,$~
for all $x$ in $\G$,   $f,g$ in $\G^*$.
\epro
\begin{proof}
$\bullet$ The linear map $\psi\mapsto \psi\circ\theta$ is an isomorphism between the space of equivariant linear 
maps $\psi:\G^*\to\G$ and the space $\mathcal J.$
Indeed, if $\psi$ is equivariant, we have, for all $x,y$ in $\mathcal G$,
$\psi\circ\theta([x,y])=-\psi(ad^*_y\theta(x))=-ad_y\psi(\theta(x))=[ \psi\circ\theta (x),y]$. 
 Hence, $\psi\circ\theta$ is in $\mathcal J.$ Conversely, if $j$ is in $\mathcal J$, then $j\circ\theta^{-1}$ is equivariant, as it satisfies
  $j\circ\theta^{-1}\circ ad^*_x= j\circ ad_x\circ\theta^{-1}= ad_x \circ j\circ\theta^{-1}$. 
   This correspondence is obviously linear and invertible. Hence, we get the isomorphism between (a) and (b).
\newline\noindent
$\bullet$  The isomorphism between the space $\mathcal J$ and adjoint-invariant bilinear forms is given as follows:
 $j\in\mathcal J\mapsto B_j$,  where $B_j(x,y):=\mu(j(x),y).$ 
for any $x,y$ in $\G$. For any $x,y,z$ in $\G$
$B_j([x,y],z):=\mu(j([x,y]),z)=\mu([x,j(y)],z)= - \mu(j(y),[x,z])= -B_j(y,[x,z])$. 
Conversely, if $B$ is an adjoint-invariant bilinear form on $\G,$ then the endomorphism $j$, defined by 
 $\mu(j(x),y):=B(x,y)$, belongs to $\mathcal J,$ as it satisfies
$\mu(j([x,y]),z):= B([x,y],z)=B(x,[y,z])= \mu(j(x) ,[y,z]) = \mu([j(x),y] ,z),$ 
for all elements  $x,y,z$ of $\G$.
\\
$\bullet$ From Lemma \ref{lem:coad-inv}, the space of equivariant linear maps $\psi$ bijectively corresponds to that of 
co-adjoint-invariant bilinear forms on $\G^*,$ via $\psi\mapsto \langle , \rangle_\psi$.
\end{proof}
Now, suppose $\psi$ is skew-symmetric with respect to the duality pairing.
 Let $\omega_\psi$ denote the corresponding skew-symmetric bilinear form  on $\G$, i.e.  
$\omega_\psi(x,y) := \mu(\psi\circ\theta (x),y),$
for all $x,y$ in $\G$. Then, $\omega_\psi$ is adjoint-invariant.  If we denote by $\partial$  the Chevalley-Eilenberg 
coboundary operator, that is,
$- (\partial\omega_\psi)(x,y,z) =  \omega_\psi([x,y],z)+\omega_\psi([y,z],x)+\omega_\psi([z,x],y),$
the following formula holds true
$(\partial \omega_\psi)(x,y,z)
= -\omega_\psi([x,y],z),$ 
for all $x,y,z$ in $\G$.
\bcor
The following are equivalent:~
(a) $\omega_\psi$ is closed;~
(b) $\psi\circ \theta([x,y])=0$, for all $x,y$ in $\G$;~
(c) $Im \psi$ is in the centre of $\G$.\\
\noindent
In particular, if $\dim [\G,\G]\ge \dim\G-1$, then $\omega_\psi$ is closed if and only if $\psi=0.$
\ecor
\begin{proof}
 The above equality also reads: for all  $x,y,z$ in $\G$,
\beq \label{partial-omega}
\partial\omega_\psi(x,y,z)=-\omega_\psi([x,y],z)=-\mu(\psi\circ\theta([x,y]),z),
\eeq
 and gives the proof that (a) and (b) are equivalent. In particular,
if $\G=[\G,\G]$ then, obviously  $\partial\omega_\psi =0$ if and only if $\psi=0,$ as $\theta$  is invertible.
Now  suppose $\dim [\G,\G]= \dim\G-1$ and set $\G=\R x_0\oplus [\G,\G],$ for some $x_0$ in  $\G.$
If $\omega_\psi$  is closed, we already know that $\psi\circ \theta$ vanishes on $[\G,\G].$ Below, we show that, 
it also does on $\R x_0.$ Indeed, the formula
$0=-\omega_\psi([x,y],x_0)=\omega_\psi(x_0, [x,y])$$=\mu(\psi\circ\theta(x_0), [x,y]),$
for all  $x,y$ in $\G$, obtained by taking $z=x_0$ in (\ref{partial-omega}),
coupled with the obvious equality $0=\omega_\psi(x_0,x_0)= \mu(\psi\circ\theta(x_0), x_0)$,
are equivalent to $\psi\circ\theta(x_0)$ satisfying $\mu(\psi\circ\theta(x_0),x)=0$
for all $x$ in $\G$. As $\mu$ is non-degenerate, this means that $\psi\circ\theta(x_0)=0$.
Hence $\psi=0.$ Now, as every $f$ in $\G^*$ is of the form $f=\theta(y),$ 
for some $y$ in $\G,$  the formula
$\psi\circ\theta([x,y])= \psi\circ ad_x^*\theta(y)=  ad_x\circ \psi\circ \theta(y) = [x, \psi\circ \theta(y)],$
for all $x,y$ in $\G$, shows that $\psi\circ\theta([x,y])=0$, for all $x,y$ of $\G$ if and only if $Im \psi$ 
is a subset of the centre of $\G.$ Thus, (b) is equivalent to (c).
\end{proof}
Now we pull any  $\xi$ of $\mathcal E$ back to an endomorphism $\xi'$ of $\G$ by the formula $\xi':=\theta^{-1} \circ
\xi \circ \theta.$
\bpro\label{guimbita1}
Let $(\G,\mu)$ be an orthogonal Lie algebra and $\G^*$ its dual space. Define $\theta : \G \to \G^*$ by 
$\langle \theta(x),y \rangle := \mu(x,y)$,
and let $\mathcal E$ and $\mathcal S$ stand for the same Lie algebras as above.
The linear map $U:~\xi \mapsto \xi':=\theta^{-1} \circ
\xi \circ \theta$ is an isomorphism of Lie algebras between $\mathcal E$ and $\mathcal S.$
\epro
\proof
Let $\xi$ be in $\mathcal E,$ with $[\xi,ad^*_x]=ad^*_{\alpha(x)}$, for every $x$ in $\G$.
The image $U(\xi)=:\xi'$ of $\xi,$ satisfies, for any  $x$ in $\G$,
\beqn
[\xi',ad_x] &:=& \xi' \circ ad_x - ad_x \circ \xi' 
            = \theta^{-1} \circ \xi \circ \theta \circ ad_x - ad_x \circ \theta^{-1} \circ \xi \circ \theta \nonumber \\
            &=& \theta^{-1} \circ \xi \circ ad^*_x \circ \theta - \theta^{-1} \circ ad^*_x \circ \xi \circ \theta \nonumber \\
     &=& \theta^{-1}\circ (\xi \circ ad^*_x - ad^*_x \circ \xi)\circ \theta 
     = \theta^{-1} \circ ad^*_{\alpha(x)} \circ \theta, \qquad \mbox{ since } [\xi,ad^*_x] = ad^*_{\alpha(x)} ; \nonumber \\
     &=& \theta^{-1} \circ \theta\circ ad_{\alpha(x)}, \qquad \mbox{ since $\theta$ is equivariant } ; \nonumber \\
    &=& ad_{\alpha(x)}.   \nonumber
\eeqn
Now we have $[U(\xi_1),U(\xi_2)]=U([\xi_1,\xi_2])$ for all $\xi_1,\xi_2$ in $\mathcal E$, as seen below.
\beqn
[U(\xi_1),U(\xi_2)]&:=&U(\xi_1)U(\xi_2) - U(\xi_2)U(\xi_1)\nonumber\\
&:=&\theta^{-1}\circ\xi_1\circ\theta\circ \theta^{-1}\circ\xi_2\circ\theta - \theta^{-1}\circ\xi_2\circ\theta\circ \theta^{-1}\circ\xi_1\circ\theta\nonumber\\
&=& \theta^{-1}\circ[\xi_1,\xi_2]\circ\theta= U([\xi_1,\xi_2]).\nonumber ~~~~~~~~~~~~~~~~~~~~~~~~~~~~~~~~~~~~~~~~~~~\qed
\eeqn
\bpro
The linear map $P:~\beta\mapsto D_\beta:=\theta^{-1}\circ \beta,$ is an isomorphism between the space of cocycles 
$\beta:\G\to\G^*$ and the space $\der(\G)$ of derivations of $\G.$
\epro
\begin{proof}
  The proof is straightforward. If $\beta:\G\to\G^*$ is a cocycle, then the linear map 
  $D_\beta:\G\to\G$,~ $x\mapsto\theta^{-1}(\beta(x))$ is a derivation of $\G,$ as we have
$D_\beta[x,y]=\theta^{-1}\big(ad^*_x\beta(y)-ad^*_y\beta(x)\big)=[x,\theta^{-1}(\beta(y))]- [y,\theta^{-1}(\beta(x))].$ 
 Conversely, if $D$ is a derivation of $\G,$ then the linear map \\
$\beta_D:=P^{-1}(D)=\theta\circ D:\G\to\G^*,$ is 1-cocycle. Indeed, for every $x,y$ in $\G,$ we have
\\
$\beta_D[x,y]=\theta([Dx,y]+[x,Dy])= - ad^*_y(\theta\circ D (x)) + ad^*_x (\theta\circ D (y)).$
\end{proof}
It is now easy to see the following straightforward corollary.
\begin{proposition}\label{coro:liesuperalgebra} If $\G$ is an orthogonal Lie algebra, any derivation of $\D:=T^*\G$ has the following form:
for any element $(x,f)$ in $T^*\G$, 
\begin{eqnarray}\label{isomorphismbinvendo}
\phi(x,f)=\Big(\alpha_1(x)+j_2\circ\theta^{-1}(f)\;,\; \theta\circ\alpha_2(x)+(j_1-\alpha_1)^t(f)\Big),
\end{eqnarray}
where $\alpha_1,\alpha_2\in\der(\G)$, ~$j_1,j_2\in\mathcal J$ and   $Im(j_2)$ is a subset of the center $Z(\G)$ of $\G.$
\newline\noindent In particular, the map  $\{j\in\mathcal J,$ such that $Im(j)\subset Z(G)\}$ $\to$ $\Psi,$  $j\mapsto j\circ\theta^{-1}$ is an isomorphism.
\end{proposition}
\proof  Equality (\ref{isomorphismbinvendo}) is already proved above. The condition  $ad_{j_2\circ\theta^{-1}(f)}^*g\!\!=\!\!ad_{j_2\circ\theta^{-1}(g)}^*\!f$, for all $f,g$ in~$\G^*$ required for elements $j_1\circ\theta^{-1},$ $  j_2\circ\theta^{-1}$ of $\Psi,$ is equivalent to 
 $\theta([j_2(x),y])=ad_{j_2(x)}^*\theta(y)\!\!=\!\!ad_{j_2(y)}^*\!\theta(x)= -\theta([x,j_2(y)])=-\theta([j_2(x),y]),$ for all $x,y\in\G$.  Hence $\theta([j_2(x),y])=0,$ that is, $[j_2(x),y]=0,$  for all $x,y\in\G$. Proposition \ref{prop:ortho1} completes the proof.\qed

\subsection{Case of semi-simple Lie algebras}\label{chap:semi-simple}
If $\mathcal G$ is a semi-simple Lie algebra and
$ \G = \s_1\oplus\cdots \oplus \s_p$  a decomposition of $\mathcal G$ into a direct sum of simple ideals,
let us denote by $\s_i^*,$ the dual vector space to each simple component $\s_i,$ $i=1,\ldots, p.$ We view each $\s_i^*$ 
as the subspace of $\G^*$ made of all those linear forms on $\G$ whose kernel contains all the $\s_j$, $j\neq i,$ 
except possibly $\s_i.$ This induces a decomposition of $\G^*$ into the following direct sum 
$ \G^* = \s_1^*\oplus\cdots \oplus \s_p^*.$ Since $\G$ is semi-simple, then every derivation is inner. 
Thus in particular, the derivation $\phi_{11}$ obtained in Subsection \ref{subsect:derivations-cotangent} is of the form
 $\phi_{11} = ad_{x_0}$,
for some  $x_0$ in $\G$. The semi-simplicity of $\G$ also implies that the $1$-cocycle $\phi_{12}$
obtained in Subsection \ref{subsect:derivations-cotangent} is a coboundary. That is, there exists an element $f_0$ of $\G^*$ such that 
$\phi_{12}(x) = - ad^*_xf_0,$
for all $x$ in $\G$. Here is a direct corollary of Lemma \ref{lem:intertwining}.
\bpro\label{prop:psi-semi-simple} 
If $\G$ is a semi-simple Lie algebra, then every linear map $\psi:\G^*\to\G$ which is equivariant with respect 
to the adjoint and co-adjoint actions of $\G$ and satisfies (\ref{relation-commutation}), 
is necessarily identically equal to zero.
\epro
\begin{proof}
A Lie algebra is semi-simple if and only if it contains no non-zero proper Abelian ideal. But from Lemma \ref{lem:intertwining}, 
$Im \psi$ must be an Abelian ideal of $\G$. So $Im \psi=\{0\}$.
\end{proof}
\brmq
From Theorem \ref{thm:specialcase} and Proposition \ref{prop:psi-semi-simple}, 
the cohomology space $ H^1(\D,\D)$ is determined by the space $\mathcal J$, or equivalently, by the space of
adjoint-invariant bilinear forms on $\G.$
\ermq
\bcor
If $G$ is a semi-simple Lie group with Lie algebra $\G$, then the space of bi-invariant bilinear forms on $G$ 
is of dimension $\dim H^1(\D,\D).$
\ecor
\bpro\label{guimbita2}
Suppose  $\G$ is a simple Lie algebra over $\K=\C$. Then, \\
(a) every linear map $j:\G\to\G$ in $\mathcal J$  is of the form $j(x)=\lambda x$, for some  $\lambda$ in $\C$; \\
(b) every element $\xi$ of $\mathcal E$ is of the form
$\xi = ad^*_{x_0} + \lambda id_{\G^*},$
for some  $x_0$ in $\G$ and   $\lambda$ in $\C$.
\epro
\begin{proof}
Part (a) is obtained from relation (\ref{aby}) and Schur's lemma.
From Propositions \ref{decomposition-xi} and \ref{decomposition-xi'}, for every $\xi$ in
$\mathcal E,$ there exist $\alpha$ in $\der(\G)$ and $j$ in $\mathcal J$  such that
$\xi^t=-\alpha + j$.
As $\G$ is simple and from part (a), there exist $x_0$ in $\G$ and $\lambda$ in $\C$ such that $\xi^t=-ad_{x_0}+\lambda id_\G.$
\end{proof}
\bpro
Let $G$ be a simple Lie  group with Lie algebra $\G$ over $\K=\C$.
Let $\D:=\G\ltimes \G^*$ be the Lie algebra of the cotangent bundle $T^*G$ of $G$. Then, the first
cohomology space of $\D$ with coefficients in $\D$ is
$ H^1(\D,\D) \stackrel{\sim}{=} \C $.
\epro
\begin{proof}
Indeed, a derivation $\phi : \D \to \D$ can be written, for every element
$(x,f)$ of  $\D$, as
$\phi(x,f) = ([x_0,x]\, , \, ad^*_{x_0}f - ad^*_xf_0 + \lambda f)$,  
where $ x_0$ and $f_0$ are fixed elements in $\G$ and  $\G^*$ respectively.
The inner derivations are those with $\lambda=0$.
It follows that the first  cohomology space of $\D$ with values in $\D$ is given by
$H^1(\D,\D) = \{\phi : \D \to \D : \phi(x,f)=(0,\lambda f), \lambda \in \C\}$
$ = \{\lambda(0, id_{\G^*}), \lambda \in \C \} $ $= \C id_{\G^*}$. 
 \end{proof}
As a direct consequence, we get the
\bcor 
If  $\G$ is a semi-simple Lie algebra over $\C,$ then $dim H^1(\D,\D) = p,$ where $p$ is the number of simple components 
of $\G$. 
\ecor
Consider  a semi-simple Lie algebra $\G$ over $\C$ and set $\G=\s_1 \oplus \cdots
\oplus \s_p,\; p \in \N^*$, where $\s_i,\; i=1,\ldots,p$, are simple Lie algebras. From Lemma \ref{lem:decomp-semi-simpl}, 
$\xi'$ preserves each $\s_i.$
Thus from Proposition \ref{guimbita2}, the restriction $\xi'_i$ of $\xi'$ to each $\s_i$ equals
$\xi'_i=ad_{x_{0_i}} + \lambda_i id_{\s_i}$, for some $x_{0_i}$ in $\s_i$ and
some $\lambda_i\in\C$.
 Hence, $\xi'=ad_{x_0} \oplus_{i=1}^p \lambda_i id_{\s_i}$,
where $x_0=x_{0_1}+ \cdots + x_{0_p} \in \s_1\oplus \cdots \oplus \s_p$
and $\oplus_{i=1}^p \lambda_i id_{\s_i}$ acts on an element $(x_{1}+ \cdots + x_{p})$ of $\s_1\oplus \cdots \oplus \s_p$
as follows: $(\oplus_{i=1}^p \lambda_i id_{\s_i})(x_{1}+ \cdots + x_{p})=
\lambda_1 x_{1}+ \cdots + \lambda_p x_{p}$.
In particular, we have proved
\bcor
Consider the decomposition of a semi-simple Lie algebra $\G$ over $\C$ into a sum
$\G=\s_1 \oplus \cdots \oplus \s_p,$  of simple Lie algebras
$\s_i$, $i=1,\ldots,p\in \N^*$. If a linear map $j:\G\to\G$ satisfies $j[x,y]=[jx,y],$
then there exist  $\lambda_1, \ldots,\lambda_p$ in $\C$ such that
$j=\oplus_{i=1}^p\lambda_iid_{\s_i}.$ More precisely $j(x_1+\cdots + x_p)=\lambda_1x_1+\cdots + \lambda_px_p,$
if $x_i$ is in $\s_i, ~ i=1,\ldots,p.$
\ecor
Now, we already know from Proposition \ref{prop:psi-semi-simple}, that $\psi$ vanishes identically.
So, a $1$-cocycle $\phi$ of $\D$ is given by: for every $x$ in $\G$ and every $f:=f_1 +\cdots +f_p$ 
in $\s^*_1\oplus \cdots \oplus \s^*_p=\G^*$, 
\beq\label{eq:cocycle-semi-simple}
\phi(x,f)=([x_0,x], ad^*_{x_0}f-ad^*_xf_0 + \sum_{i=1}^p\lambda_if_i),
\eeq
where $x_0$ is in $\G$, $f_0$ is in $\G^*$ and $\lambda_i$,
$i=1,\ldots,p$, are in $\C$. We then have,
\bpro
Let $\G$ be a semi-simple Lie algebra $\G$ over $\C$ and  $\D:= T^*\G$ as above. Then, the first
 cohomology space of $\D$ with respect to the adjoint action, is given by
$ H^1(\D,\D) \stackrel{\sim}{=} \C^p$, where $p$ is the number of the simple
components of $\G$.
\epro
Let $G$ be a semi-simple Lie group with Lie algebra $\G=\s_1 \oplus \cdots \oplus \s_p,$  where $\s_i$ are simple subalgebras 
(ideals, in fact) over $\mathbb C$. Now, 
make the Abelian Lie algebra $\mathbb C^p$ naturally act on $\G^*$ by means of linear maps $\rho(\lambda)$ as follows. 
If $\lambda=(\lambda_1,\ldots, \lambda_p)$ is in $\mathbb C^p$  
and  $f=f_1+\cdots+f_p$  belongs to $\G^*$, where $f_i\in \s^*_i,$ we set $\rho (\lambda)f:= \lambda_1f_1+\cdots+\lambda_pf_p. $ 
This lifts up to an action $\tilde \rho$ of $\mathbb C^p$ on $T^*\G$ by $\tilde \rho (\lambda) (x,f):= (0,\rho(\lambda) f),$ 
for all $\lambda\in \mathbb C^p$ and $ (x,f)\in  T^*\G.$ Now the latter integrates to an action $\mathcal L$ of the 
Abelian Lie group $(\mathbb C^p,+)$ on $T^*G$ by, if  $\lambda=(\lambda_1,\ldots, \lambda_p)\in \mathbb C^p$  
and  $(\sigma,f)\in T^*G$ as above, then $\mathcal L(\lambda)(\sigma, f):= (\sigma, e^{\lambda_1}f_1+\cdots+e^{\lambda_p}f_p).$ 
We will simply write $\lambda\cdot f$ for $e^{\lambda_1}f_1+\cdots+e^{\lambda_p}f_p.$
\bthm 
Let $G$ be a semi-simple Lie group and $\mathcal G$ its Lie algebra over $\mathbb C$.
Then the group $Aut(T^*\mathcal G)$ of automorphisms of the Lie algebra $T^*\mathcal G$ is, at least locally, isomorphic 
to the semi-direct product $\mathbb C^p\ltimes_{\mathcal L} T^*G,$ where $p$ is the number of simple component of $\mathcal G$ and
$\mathcal L (\lambda) (\sigma,f)= (\sigma,\lambda\cdot f),$ for all $\lambda\in \mathbb C^p$ and
 $ (\sigma,f)\in T^*G.$ 
\ethm 
\begin{proof}
 The Lie bracket on $\mathbb C^p\ltimes_{\tilde \rho} T^*\G$ reads: if $\lambda=(\lambda_1,\ldots,\lambda_p)$, $t=(t_1,\ldots,t_p)$ are in 
 $\C^p$ and $(x,f)$, $(y,g)$ belong to $T^*\G$ as above, then
 \begin{equation} 
 [(\lambda;x,f),(t;y,g)] 
                             = \left(0; [x,y] \,,\, ad_x^*g-ad_y^*f + \sum_{i=1}^p\lambda_ig_i -\sum_{k=1}^pt_kf_k\right).
 \end{equation}
 Now consider the linear map $\Gamma : \mathbb C^p \ltimes_{\tilde \rho}T^*\mathcal G \to \hbox{\rm der}(T^*\mathcal G)$, 
 $(\lambda;x_0,f_0) \mapsto \Phi_{\lambda,x_0,f_0}$, where $\Phi_{\lambda,x_0,f_0}$ is defined by
 (\ref{eq:cocycle-semi-simple}). Let $(\lambda;x_0,f_0)$ and $(t;x_0',f_0')$ be two elements of 
 $\mathbb C^p\ltimes_{\tilde \rho}T^*\mathcal G$
  such that $\Gamma(\lambda;x_0,f_0)=\Gamma(t;x_0',f_0')$. Then, for any element $(x,f)$ of $T^*\G$, 
  we have $\Phi_{\lambda,x_0,f_0}(x,f)=\Phi_{t,x_0',f_0'}(x,f)$. 
  The latter implies that, for every element $(x,f)$ of $T^*\mathcal G$, 
 $ \big([x_0,x], ad_{x_0}^*f-ad_x^*f_0 + \sum_{i=1}^p\lambda_if_i \big) =\big([x_0',x], ad_{x_0'}^*f-ad_x^*f_0' 
 + \sum_{i=1}^pt_if_i \big)$. 
 We then have, on one hand, $[x_0,x]=[x_0',x]$, for any $x$ in $\mathcal G$. 
 Since the centre of $\G$ is trivial, then $x_0'=x_0$. On the other 
 hand,  taking account the fact that $x_0=x_0'$, we have, 
 $-ad_x^*f_0 + \sum_{i=1}^p\lambda_if_i  =-ad_x^*f_0' + \sum_{i=1}^pt_if_i$, for all 
 $x\in \G$ and all $f\in \G^*$. If, in particular we take $f=0$ in the latter equality, we obtain $ad_x^*f_0 =ad_x^*f_0'$, 
 for all  $x\in \G$.  Again the triviality  of the centre of $\G$ implies that $f_0=f_0'$. 
 It is now easy to see that $\lambda_i=t_i$, 
 for all $i=1,\ldots,p$. We have then 
 prove that  $(\lambda;x_0,f_0)=(t;x_0',f_0')$ and hence, $\Gamma$ is injective. 
 Suppose, for the surjectivity, that $\Phi$ is a derivation
 of $T^*\G$, then  $\Phi$ is given by (\ref{eq:cocycle-semi-simple}), 
 for some $x_0$ in $\G$, $f_0$ in $\G^*$ and $\lambda=(\lambda_1,\ldots,\lambda_p) \in \C^p$. 
 Hence,  $\Gamma(\lambda;x_0,f_0)=\Phi$. So, $\Gamma$ is an isomorphism between the vector spaces 
 $\mathbb C^p\ltimes_{\tilde \rho} T^*\G$ and $\hbox{\rm der}(T^*\G)$.
 
 Let us now show that $\Gamma$ is  compatible with the brackets of the two Lie algebras. 
 Consider two elements $(\lambda;x_0,f_0)$ and $(t;x_1,f_1)$ 
 of $\mathbb C^p\ltimes_{\tilde \rho}T^*\G$. We have, on one hand: 
 \beq
 \Gamma\!\big([(\lambda;x_0;f_0),(t;x_1;f_1)]\big)\!\! =\!\! \Gamma\!\!\left(\!0;[x_0,x_1], ad_{x_0}^*f_1\!\!-\!\!ad_{x_1}^*f_0 
 \!\!+\!\! \sum_{i=1}^p\lambda_if_{1i} \!\!-\!\!\sum_{k=1}^pt_kf_{0k}\!\!\right) \!\!=\!\! \Phi_{0,[x_0,x_1],F}, \nonumber
\eeq  
 where for simplicity we have set $F=ad_{x_0}^*f_1-ad_{x_1}^*f_0 +\sum_{i=1}^p\lambda_if_{1i}-\sum_{k=1}^pt_kf_{0k}$.
 Now, given an element $(x,f)$ of $T^*\G$, we have
 \beqn
 \Phi_{0,[x_0,x_1],F}(x,f)\!\!\!\!&=&\!\!\!\!\! \left(\!\!\Big[[x_0,x_1],x\Big], ad_{[x_0,x_1]}^*f\!-\!ad_x^*\big(ad_{x_0}^*f_1\!-\!ad_{x_1}^*f_0 \!+\!  
                               \sum_{i=1}^p\lambda_if_{1i}\!-\!\sum_{k=1}^pt_kf_{0k}\big)\!\!\right)\cr 
                           &=& \Big(\Big[[x_0,x_1],x\Big], ad_{[x_0,x_1]}^*f - ad_x^*\circ ad_{x_0}^*f_1 +ad_x^*\circ ad_{x_1}^*f_0\cr 
                           & & -\sum_{k=1}^p\lambda_kad_{x}^*f_{1k}+ \sum_{k=1}^pt_kad_{x}^*f_{0k} \Big)    \label{eq:isomorphism-gamma1}
 \eeqn 
On the other hand, $[\Gamma(\lambda;x_0,f_0),\Gamma(t;x_1,f_1)]\!=\![\Phi_{\lambda,x_0,f_0},\Phi_{t,x_1,f_1}]$. For all $(x,f)$ in $T^*\G$, 
\beqn
[\Phi_{\lambda,x_0,f_0},\Phi_{t,x_1,f_1}](x,f)\!\!\!\!&=&\!\!\!\! \Phi_{\lambda,x_0,f_0}\Big([x_1,x], ad_{x_1}^*f-ad_x^*f_1 + \sum_{i=1}^pt_if_i\Big) \cr 
                                         & & -\Phi_{t,x_1,f_1}\Big([x_0,x], ad_{x_0}^*f-ad_x^*f_0 + \sum_{i=1}^p\lambda_if_i \Big) \cr 
                                         &=& \left(\Big[x_0,[x_1,x]\Big], ad_{x_0}^*\Big(ad_{x_1}^*f-ad_x^*f_1 + \sum_{i=1}^pt_if_i\Big)-
                                             ad_{[x_1,x]}^*f_0 \right. \cr 
                                         & &   \left. + \sum_{k=1}^p\lambda_k\Big(ad_{x_1}^*f-ad_x^*f_1 + \sum_{i=1}^pt_if_i\Big)_k\right) \cr 
                                          & & - \left(\Big[x_1,[x_0,x]\Big], ad_{x_1}^*\Big(ad_{x_0}^*f-ad_x^*f_0 + \sum_{i=1}^p\lambda_if_i\Big)-
                                             ad_{[x_0,x]}^*f_1 \right. \cr 
                                         & &   \left. + \sum_{k=1}^pt_k\Big(ad_{x_0}^*f-ad_x^*f_0 + \sum_{i=1}^p\lambda_if_i\Big)_k\right) \cr 
                                         &=& \Big(\Big[[x_0,x_1],x\Big], ad_{[x_0,x_1]}^*f - ad_x^*\circ ad_{x_0}^*f + ad_x^*\circ ad_{x_1}^*f_0 \cr   
                                         & & -\sum_{k=1}^p\lambda_kad_{x_k}^*f_1 + \sum_{k=1}^pt_kad_{x_k}^*f_0 \Big). \label{eq:isomorphism-gamma2}
\eeqn 
From (\ref{eq:isomorphism-gamma1}) and (\ref{eq:isomorphism-gamma2}), we get that 
 $\Gamma([\lambda;x_0,f_0],[t;x_1,f_1])=[\Gamma(\lambda;x_0,f_0),\Gamma(t;x_1,f_1)]$. That is $\Gamma$ is actually an isomorphism
between the Lie algebras $\mathbb C^p\ltimes_{\tilde \rho} T^*\G$ and $\hbox{\rm der}(T^*\G)$. We have then prove that 
$\hbox{\rm Aut}(T^*\G)$ is locally isomorphic to $\mathbb C^p\ltimes_\mathcal L T^*G$.
\end{proof}

%

\subsection{Case of compact Lie algebras}\label{chap:compact}
It is  known that a compact Lie algebra $\G$ decomposes as the direct sum
$\G=[\G,\G]\oplus Z(\G)$ of its derived ideal $[\G,\G]$ and its centre $Z(\G)$, with $[\G,\G]$ semi-simple and compact. 
This yields a decomposition $\G^*=[\G,\G]^*\oplus Z(\G)^* $  of $\G^*$ into a direct sum of the dual spaces 
$[\G,\G]^*$, $Z(\G)^*$ of $[\G,\G]$ and $Z(\G)$ respectively.
 On the other hand, $[\G,\G]$ also decomposes into a direct sum $[\G,\G]=\s_1\oplus\cdots \oplus\s_p$ of simple ideals $\s_i.$
From Theorem \ref{derivationschar}, a derivation $\phi$ of $\D := T^*\G$ has the  form 
$\phi (x,f) =\Big(\alpha(x) + \psi(f),\beta(x) + \xi(f)\Big),$
with conditions listed in  Theorem \ref{derivationschar}.
From Lemma \ref{lem:intertwining}, $Im \psi$ is an Abelian ideal of $\G$;
thus $Im \psi \subset Z(\G)$.
As a consequence, we have $\psi (ad_x^*f)=[x,\psi(f)]=0$, for any $x$ of $\G$
and  $f$ of $\G^*$.
\blem \label{lem:semi-simple-psi}Let $(\tilde \G,\mu)$ be an orthogonal Lie algebra  satisfying $\tilde\G= [\tilde\G,\tilde\G].$ 
Then, every $g$ in $\tilde\G^*$ is a finite sum of elements of the form $g_i=ad^*_{\bar x_i}\bar g_i,$ 
for some $\bar x_i$ in $\tilde \G,$ $\bar g_i$ in $ \tilde \G^*.$
\elem
 \begin{proof} Indeed, consider an isomorphism $\theta: \tilde\G \to \tilde\G^*$ of $\tilde\G$-modules. 
 For every $g$ in $\tilde\G^*,$ there exists $x_g$ in $\tilde\G$ such that $g=\theta (x_g).$ 
 But as $\tilde\G= [\tilde\G,\tilde\G],$ we have $ x_g=[x_1,y_1]+\cdots +[x_s,y_s]$ for some $x_i,y_i$ in $\tilde\G.$
Thus $g=\theta ([x_1,y_1])+\cdots + \theta([x_s,y_s])$ $=ad^*_{x_1}\theta(y_1)+\cdots + ad^*_{x_s}\theta(y_s)$ 
$=$ $ad^*_{\bar x_1}\bar g_1$ $+\cdots +ad^*_{\bar x_s}\bar g_s$ where $\bar x_i=x_i$ and $ \bar g_i= \theta(y_i).$ 
\end{proof}
 A semi-simple Lie algebra being orthogonal (with, e.g.  its Killing form as $\mu$), then
each $\psi$ in $\Psi$ vanishes on $[\G,\G]^*.$  This is due to Lemma \ref{lem:semi-simple-psi} 
 and the equality  $\psi (ad_x^*f)=0$, for all $x$ in $\G$, $f$ in $\G^*.$

 Of course, the converse is true. Every linear map 
$\psi:\G^*\to\G$ with $Im \psi \subset Z(\G)$ and $\psi([\G,\G]^*)=0$, is in $\Psi.$ Hence we can make the following identification.
 \blem Let $\G$ be a compact Lie algebra, with centre $Z(\G).$ Then $\Psi$ is isomorphic to the space $L(Z(\G)^*,Z(\G))$ 
 of linear maps $Z(\G)^*\to Z(\G).$
\elem
 The restriction of the cocycle $\beta$ to the semi-simple ideal $[\G,\G]$ is a coboundary,
that is, there exits an element $f_0$ in $\G^*$ such that for any $x_1$ in $[\G,\G]$,
$\beta(x_1) = -ad_{x_1}^*f_0.$ Now for $x_2$ in $Z(\G)$, one has
$ 0= \beta[x_2,y]=-ad^*_y\beta(x_2)$, for all $y$ of  $\G$.
In other words, $\beta(x_2) ([y,z]) = 0$, for all $y,z$ in $\G$. That is, $\beta(x_2)$
vanishes on $[\G,\G]$ for every $x_2 \in Z(\G)$. Hence, we write
$ \beta(x) = -ad_{x_1}^*f_0 + \eta(x_2),$  for all $x:=x_1+x_2$ in $[\G,\G] \oplus Z(\G),$
where $\eta : Z(\G) \to Z(\G)^*$ is a linear map. This simply means the following well known result.
\blem
 Let $\G$ be a compact Lie algebra, with centre $Z(\G).$ Then the first space $H^1(\G,\G^*)$ of the cohomology 
 associated with the co-adjoint action of $\G,$ is isomorphic to the space $L(Z(\G),Z(\G)^*)$.
\elem
It is known that if $\G$ is a compact Lie algebra with centre $Z(\G)$, then
$H^1(\G,\G)\!\!\stackrel{\sim}{=}\!\! \hbox{\rm End}(Z(\G))$. 
We have also already seen that $\xi$ is such that $\xi^t=-\alpha + j$, where $\alpha$
is a derivation of $\G$ and $j$ is an endomorphism of $\G$ satisfying
$j([x,y])=[j(x),y]=[x,j(y)]$. Both $\alpha$ and $j$ preserve each of
$[\G,\G]$ and $Z(\G).$ Thus we can write
$\alpha = ad_{{x_0}_1} \oplus \varphi,$  for some  ${x_0}_1 \in [\G,\G],$
where $\varphi$ is in $\hbox{\rm End}(Z(\G))$. Here $\alpha$ acts on an element $x=x_1+x_2$,
where $x_1$ is in $[\G,\G]$, $x_2$ belongs to $Z(\G),$ as follows:
$\alpha(x)  = (ad_{{x_0}_1} \oplus \varphi)(x_1+x_2):= ad_{{x_0}_1}x_1 + \varphi(x_2).$

Now suppose, for the rest of this section, that $\G$ is a compact Lie algebra over $\C$.
 We write $j=\oplus_{i=1}^p \lambda_i id_{[\G,\G]_i} \oplus \rho$, where
$\rho$ is in $\hbox{\rm End}(Z(\G))$, $\lambda_i\in\C$ and $j$ acts on $x:=x_1 + x_2$
 as follows: if $x_1:=x_{11} + x_{12} + \cdots + x_{1p}$ is in $[\G,\G]$, $x_2$ is in $Z(\G)$ and $ x_{1i}$ belongs to $\s_i$, then
 $j(x) = \Big(\bigoplus_{i=1}^p \lambda_i id_{[\G,\G]_i} \oplus \rho\Big)
(x_{11} + x_{12} + \cdots + x_{1p}+x_2) = \sum_{i=1}^p \lambda_i x_{1i} + \rho(x_2)$. 
Hence, we have the 
\blem Suppose $\G$ is a compact Lie algebra  over $\C$ with centre $Z(\G).$\\  ~
Then $\mathcal J\stackrel{\sim}{=} \C^p\oplus  \hbox{\rm End}(Z(\G)),$ where $p$ is the number of simple components of $[\G,\G]$.
\elem
\noindent
The expression of the linear map $\xi$ now reads
$
\xi = \big[ad_{x_{01}}^*  + (\oplus_{i=1}^p \lambda_iid_{[\G,\G]_i^*})\big] \oplus \delta,
$
where $\delta^t(x_2) = \rho(x_2) + \varphi(x_2)$, for all $x_2$ in $Z(\G)$,  ${x_0}_1$ is in $[\G,\G]$ and  the $\lambda_i$'s are in  $\C$.
By identifying $\hbox{\rm End}(Z(\G)),$ $L(Z(\G)^*,Z(\G))$ and $L(Z(\G),Z(\G)^*)$ to $\hbox{\rm End}(\C^k),$  where $k=\dim(Z(\G)), $ we get
 $H^1(\D,\D) = (\hbox{\rm End}(\C^k))^4 \oplus  \C^p.$ Now, if $\G=\s_1\oplus\cdots\oplus\s_p\oplus Z(\G)$
and $\G^*=\s_1^*\oplus\cdots\oplus\s_p^*\oplus Z(\G)^*$, then according to what is said above, a derivation 
$\phi$ of $\D=T^*\G$ is given as follows: if $x=x_1+x_2$ is in $\G=[\G,\G]\oplus Z(\G)$ and   
 $f=f_1+f_2=f_{11}+f_{12}+\cdots+f_{1p}+f_2$  is in $\G^*=[\G,\G]^*\oplus Z(\G)^*$, then 
 \beq\label{eq:derivation-compact} 
 \phi(x,f) = \Big([x_{01},x_1] + \varphi(x_2) + \gamma(f_2)\,,\,ad_{x_{01}}^*f_1 -ad_{x_1}^*f_0 +\sum_{i=1}^p\lambda_if_{1i}+\eta(x_2)
 +\delta(f_2) \Big),
 \eeq 
 where  $x_{01}$ and $f_0$ are some elements of $[\G,\G]$ and $\G^*$ respectively, $\varphi$ and the transpose $\delta^t$ of $\delta$ 
 are in $\hbox{\rm End}(Z(\G))$,  $\gamma : Z(\G)^* \to Z(\G)$ and  $\eta:Z(\G) \to Z(\G)^*$ are linear maps, $\lambda_i \in \C$ 
 for all $i=1,\ldots,p$.
 Now set $\dim Z(\G)=k$ and let the Lie algebra $\mathbb C^p\times (\mathfrak{gl}(\mathbb C^k))^4$ act on $T^*\G$ as follows: 
\beq \label{eq:action-Cp-GLq4-on-cotangent}
\tilde\rho(\lambda,A)(x,f)=\left(\varphi(x_2)+\gamma(f_2), \sum_{i=1}^p\lambda_if_i+\eta(x_2)+\delta(f_2)\right), 
\eeq 
for any $(x,f)$ in $T^*\G$, where $\lambda=(\lambda_1,\ldots,\lambda_p) \in \C^p$, 
$A=(\varphi,\gamma,\eta,\delta) \in (\mathfrak{gl}(\mathbb C^k))^4$. 
The action (\ref{eq:action-Cp-GLq4-on-cotangent}) of $\mathbb C^p\times (\mathfrak{gl}(\mathbb C^k))^4$ on $T^*\G$ 
integrates to an action $\tilde{\mathcal L}$ of the Lie group $\mathbb C^p\times (\hbox{\rm GL}(\mathbb C^k))^4$  on $T^*G$. The Lie 
bracket of the Lie algebra $[\mathbb C^p\times (\mathfrak{gl}(\mathbb C^k))^4]\ltimes_{\tilde\rho} T^*\G$ reads
\beqn 
[(\lambda;A;x,f),(t;B;y,g)]\!\!\!\!\!&=&\!\!\!\!\!\Big(0;[A,B]; [x,y]+\varphi(y_2)-\varphi'(x_2)+\gamma(g_2)-\gamma'(f_2),  ad_x^*g-ad_y^*f \cr 
                           & & +\sum_{i=1}^p\lambda_ig_{1i}-\sum_{i=1}^pt_if_{1i}+\eta(y_2)-\eta'(x_2) 
                           + \delta(g_2)-\delta'(f_2)\Big).
\eeqn  
Hence, if $(\lambda;A;x_0,f_0)$ belongs to $[\mathbb C^p\times (\mathfrak{gl}(\mathbb C^k))^4]\ltimes_{\tilde\rho} T^*\G$ and 
$(x,f)$ is in $T^*\G$, we have:
\beqn 
[(\lambda;A;x_0,f_0),(0;0;x,f)]\!\!\!\!\!&=&\!\!\!\!\!\Big(0;0; [x_0,x]+\varphi(x_2)+\gamma(f_2),  ad_{x_0}^*f-ad_x^*f_0 \cr 
                           & & +\sum_{i=1}^p\lambda_if_{1i}+\eta(x_2) + \delta(f_2)\Big).
\eeqn  
We have the following result.
\bthm
Let $G$ be a compact Lie group, whose Lie algebra $\G$ (over $\C$) has a $k$-dimensional centre and $[\G,\G]$  decomposes as a direct sum 
of $p$ simple ideals.  Then the Lie group $\hbox{\rm Aut}(T^*\G)$ of automorphisms of $T^*\G$ is, at least locally, isomorphic 
to the semi-direct product $[\mathbb C^p\times (\hbox{\rm GL}(\mathbb C^k))^4]\ltimes_{\tilde{\mathcal L}} T^*G$.
\ethm
\begin{proof}
As we need a local isomorphism, we will simply show that the two Lie groups 
$[\mathbb C^p\times (\hbox{\rm GL}(\mathbb C^k))^4]\ltimes_{\tilde{\mathcal L}} T^*G$ 
and $\hbox{\rm Aut}(T^*\G)$ have isomorphic Lie algebras. But the Lie algebra of $\hbox{\rm Aut}(T^*\G)$ is $\der(T^*\G)$ while, 
by construction, the one  of $[\mathbb C^p\times (\hbox{\rm GL}(\mathbb C^k))^4]\ltimes_{\tilde{\mathcal L}} T^*G$ is 
$[\mathbb C^p\times (\mathfrak{gl}(\mathbb C^k))^4]\ltimes_{\tilde\rho} T^*\G$.
%
Now consider the  linear map $\Delta:[\mathbb C^p\times (\mathfrak{gl}(\mathbb C^k))^4]\ltimes T^*\G \to \der(T^*\G)$ given by 
$\Delta(\lambda;A;x_0,f_0):=\Phi_{\lambda;A;x_0,f_0}$,
where, if $A=(\varphi,\gamma,\eta,\delta) \in (\mathfrak{gl}(\mathbb C^k))^4$, $x_0=x_{01}+x_{02} \in \G$, $f_0 \in \G^*$, 
$\Phi_{\lambda;A;x_0,f_0}$  is defined by (\ref{eq:derivation-compact}). It is easy to see that $\Delta$ is 
actually an isomorphism between the Lie algebras $[\mathbb C^p\times (\mathfrak{gl}(\mathbb C^k))^4]\ltimes T^*\G$ 
and $\der(T^*\G)$ and hence the Lie groups $\hbox{\rm Aut}(T^*\G)$
and $[\mathbb C^p\times (\hbox{\rm GL}(\mathbb C^k))^4]\ltimes_{\tilde{\mathcal L}} T^*G$ are  locally isomorphic.
\end{proof}

\section{Some examples}\label{section:examples}
Here, we apply the above results to produce some examples in low dimensions. In the following, if $(e_1, \ldots, e_n)$ 
is a basis of some vector space $E$, then $(e_1^*, \ldots, e_n^*)$ will denote its dual basis. We will let $e_{ij}$ 
stand for the linear map of $E$ defined as $e_{ij}(e_k)=\delta_{jk}e_i$ where $\delta_{jk}$ is the Kronecker delta 
$\delta_{jj}=1$ and $\delta_{jk}=0$ if $j\neq k.$ Hence we can express the Lie bracket of endomorphisms of $E$ as 
linear combinations of $[e_{ij},e_{kl}]= \delta_{jk}e_{il}-\delta_{il}e_{kj}$. 
We may write the relations  (\ref{decompositionphi}) in a matrix form as $\phi(x,f)=\begin{pmatrix} \alpha & \psi \\ \beta & \xi\end{pmatrix}  
\begin{pmatrix} x \\ f\end{pmatrix}$, so that, keeping the same notations for maps and their matrices in a basis of $T^*\G$,
the matrix of a derivation $\phi$ of $T^*\G$  will be of the form $\phi=\begin{pmatrix} \alpha & \psi \\ \beta & \xi\end{pmatrix}$.
We note that for some of the examples we treat here, $\der(\mathcal D)$ admits a contact structure.

\subsection{The affine Lie algebra of the real line}\label{chap:aff(R)}
The 2-dimensional affine Lie algebra $\G=\hbox{\rm aff}(\R)$ is solvable nonnilpotent with Lie bracket $[e_1,e_2]=e_2$ in 
some basis ($e_1,e_2$). The Lie algebra $\D=T^*\G$ of the cotangent bundle of any Lie group with Lie algebra $\G$, 
has a basis ($e_1,e_2,e_3,e_4$) with Lie bracket $[e_1,e_2]=e_2$, $[e_1,e_4]=-e_4$, $[e_2,e_4]=e_3$, where $e_3:=e_1^*$ 
and $e_4:=e_2^*$. This is the semi-direct product $ \R e_1\ltimes \mathcal H_3$ of the Heisenberg Lie algebra 
$\mathcal H_3=\sspan(e_2,e_3,e_4)$ and the line $\R e_1$, where $e_1$ acts on $\mathcal H_3$ by the restriction of 
the derivation $ad_{e_1}$. 
According to \cite{diatta-manga-frobenius}, any derivation of $\mathfrak{aff}(\R^n),$~$n\ge 1,$ is an inner derivation. Thus 
$\der(\G)=\sspan(ad_{e_1},ad_{e_2})$. Hence, a derivation $\alpha$ of $\G$, is of the form
$\alpha(x)=(ax^1+bx^2)e_2$,  for each $x=x^1e_1+x^2e_2$ of $\G$, where $a,b,x^1,x^2$ are in $\R$. 
Whereas, an element $\xi$ of $\mathcal E$, related to the above $\alpha$ by (\ref{relation-xi-alpha}), 
has the following  form  
$ \xi(f)=[(b+\lambda)f^3-af^4]e_3 + \lambda f^4e_4$, for $f=f^3e_3+f^4e_4$,
where $\lambda$ is in $\mathbb R$. Now the space $\Psi$ of equivariant linear maps $\psi$ is reduced to $\{0\}$ 
while a cocycle $\beta:\G\to \G^*$ is given by 
$\beta(x)=(cx^1+dx^2)e_3-dx^1e_4$,
where $c,d$ are in $\R$. Altogether,  a derivation $\phi$ of $\D=T^*\G$,  is of the form $\phi(x,f)=(\alpha(x),\beta(x)+\xi(f))$ 
and  in the basis $(e_1,e_2,e_3,e_4),$ it has the matrix
$
\phi=\begin{pmatrix}
0  & 0 &  0  & 0 \cr  
a  & b &  0  & 0 \cr 
c  & d & b+\lambda &-a \cr 
-d & 0 &  0  & \lambda \cr 
\end{pmatrix} =a (e_{21}-e_{34})+b (e_{22}+e_{33})+c e_{31} +d (e_{32}-e_{41})+ \lambda(e_{33}+e_{44})
$. 

The Lie algebra $\der(\D)$ has a basis ($\phi_1,\phi_2,\phi_3,\phi_4,\phi_5$) where 
$\phi_1:=e_{21}-e_{34}$, $\phi_2:=e_{22}+e_{33}$, $\phi_3:=e_{31}$, $\phi_4:=-e_{41}+e_{32}$, $\phi_5:=e_{33}+e_{44}$, 
so that the Lie brackets are
$[\phi_2,\phi_1]=\phi_1$, $[\phi_2,\phi_3] = \phi_3$, $[\phi_5,\phi_3]=  \phi_3$, $[\phi_5,\phi_4]=  \phi_4$. 
As a vector space, $\der(\D)$ decomposes as $\der(\D)= \G_0\oplus \G_1$, where $\G_0:=span(\phi_1,\phi_2,\phi_5)$ 
and $\G_1:=span(\phi_3,\phi_4)$. We see that $[\G_0,\G_1]= \G_1$, $[\G_0,\G_0]=\mathbb R\phi_1\subset \G_0$ and $[\G_1,\G_1]=\{0\}\subset \G_0$. 
Hence,  $\der(\D)$ is a supersymmetric Lie algebra. In fact, here, as $\Psi=\{0\}$,  then $\der(\D)$  is a Lie superalgebra (Proposition \ref{superalgebras}).
We can also rewrite  $\der(\D)$ as the semi-direct product $ \R^2\ltimes\R^3 $ of the abelian Lie algebras $\R^3=\sspan_{\R}(\phi_1,\phi_3,\phi_4)$ 
and $\R^2=\sspan_{\R}(\phi_2,\phi_5)$  so that  it is easily identified with the Lie algebra number 18, for $p=q=1,$ 
in Section 5.2 of \cite{diatta-contact}. Hence  $\der(\D)$  has a contact structure.

\subsection{The Lie algebra of the group $\hbox{\rm SO}(3)$ of rotations}\label{chap:so(3)}
Consider the Lie algebra $\G:=\mathfrak{so}(3)=\sspan(e_1,e_2,e_3)$ with $[e_1,e_2]=-e_3$, $[e_1,e_3]=e_2$, $[e_2,e_3]=-e_1$.
Thus $\D=\sspan(e_1,e_2,e_3,e_4,e_5,e_6)$ has the Lie bracket $[e_1,e_5]=-e_6$, $[e_1,e_6]=e_5$, $[e_2,e_4]=e_6$, $[e_2,e_6]=-e_4$, 
$[e_3,e_4]=-e_5$, $[e_3,e_5]=e_4$, where $e_{3+i}=e_i^*$, $i=1,2,3$. 
Since $\mathfrak{so}(3)$  is simple, we apply the results of Section \ref{chap:semi-simple}. So $\Psi=\{0\}.$ 
As  any derivation $\alpha$ is inner, that is of the form $\alpha=ad_{x_0}$, for some $x_0\in  \mathfrak{so}(3),$  thus,  
$\alpha$ and an element $\xi=ad_{x_0}^*+\lambda id_{\mathfrak{so}(3)^*}$ of $\mathcal E$ corresponding 
to $\alpha$ are given by 
$\alpha(e_1)=-ae_2-be_3$, $\alpha(e_2)=ae_1-ce_3$, $\alpha(e_3)=be_1+ce_2$ and $\xi(e_4)=\lambda e_4-ae_5-be_6$, 
$\xi(e_5)=ae_4+\lambda e_5-ce_6$, $\xi(e_6)=be_4+ce_5+\lambda e_6$,
%
for some $a,b,c,\lambda$ in $\R$.  Moreover, as a cocycle $\beta$ in $\mathcal Q$ is a coboundary, it is given by  
$\beta(x)=-ad_{x}^*f_0$, for some $f_0$ in $\mathfrak{so}(3)^*$ and any $x$ in $\mathfrak{so}(3)$. Hence, 
$\beta(e_1)=-de_5-ee_6$, $\beta(e_2)=de_4-ie_6$, $\beta(e_3)=ee_4+ie_5$,
%
for some $d,e,i$ in $\R$. Hence, in  the basis $(e_1,e_2,e_3,e_4,e_5,e_6)$, the matrix of a derivation $\phi$ of $T^*\mathfrak{so}(3)$ has the form 
$
\left(
\begin{array}{rrrrcc}
0 & a & b & 0 & 0 & 0  \cr 
-a& 0 & c & 0 & 0 & 0  \cr 
-b&-c  & 0 &0 & 0 & 0 \cr 
0 & d & e & \lambda & a & b  \cr 
-d& 0 & i&-a &\lambda & c  \cr
-e & -i & 0&-b& -c  & \lambda
\end{array}\right)
= a\phi_1+b\phi_2+c\phi_3+d\phi_4+e\phi_5+\lambda\phi_6+i\phi_7$, 
where $a,b,c,d,e,\lambda,i$ are in $\mathbb R$.
 Hence, we have $\der(\D)=\sspan(\phi_1,\phi_2,\phi_3,\phi_4,\phi_5,\phi_6,\phi_7)$, where
$\phi_1:=-e_{21}+e_{12}-e_{54}+e_{45}$, 
$\phi_2:=-e_{31}+e_{13}-e_{64}+e_{46}$,
$\phi_3:=-e_{32}+e_{23}-e_{65}+e_{56}$,
$\phi_4:=-e_{51}+e_{42}$,
$\phi_5:=-e_{61}+e_{43}$,
$\phi_6:=e_{44}+e_{55}+e_{66}$,
$\phi_7:=-e_{62}+e_{53}$,
so that the Lie algebra	 structure of $\der(\D)$  is given by  the following Lie brackets
 $[\phi_1,\phi_2]=-\phi_3$,  $[\phi_1,\phi_3]=\phi_2$,  $[\phi_1,\phi_5]=-\phi_7$,  $[\phi_1,\phi_7]=\phi_5$, $[\phi_2,\phi_3]=-\phi_1$,   
 $[\phi_2,\phi_4]=\phi_7$,  $[\phi_2,\phi_7]=-\phi_4$, $[\phi_3,\phi_4]=-\phi_5$,  $[\phi_3,\phi_5]=\phi_4$,
 $[\phi_4,\phi_6]=-\phi_4$,  $[\phi_5,\phi_6]=-\phi_5$, $[\phi_6,\phi_7]=\phi_7$.  As $\Psi=\{0\}$, then 
$\der(\D)$ is a super Lie algebra. We can also see that, it is the Lie algebra 
 $\der(\D)=\mathfrak{so}(3)\ltimes \G_{id}$, 
 where $so(3)=\sspan(\phi_1,\phi_2,\phi_3)$ and $\G_{id}$ is the semi-direct product $\G_{id}=\R \phi_6\ltimes \R^3$ of the abelian 
 Lie algebras $\R^3=\sspan(\phi_4,\phi_5,\phi_7)$ and $\R \phi_6$ obtained by letting $\phi_6$ act as the identity map on $\R^3.$ 
 Thus $\der(\D)$ is also a contact Lie algebra, as it is the Lie algebra number 4 of Section 5.3 in \cite{diatta-contact}. As $\Psi=0,$ Proposition \ref{superalgebras} insures that  $\der(\D)$ is also a Lie superalgebra.

\subsection{The Lie algebra of the group $\hbox{\rm SL}(2)$ of special linear group}\label{chap:sl(2)}
The Lie algebra $\G:=\mathfrak{sl}(2)$ of $\hbox{\rm SL}(2)$ has a basis $(e_1,e_2,e_3)$  in which its 
Lie bracket reads $[e_1,e_2]=-2e_2$,  $[e_1,e_3]=2e_3$,  $[e_2,e_3]=-e_1$. Set  $e_1^*=:e_4$, $e_2^*=:e_5$, $e_3^*=:e_6$, the Lie bracket of
$\D:=T^*\G$ in the basis $(e_1,e_2,e_3, e_4,e_5,e_6)$ is given by
$[e_1,e_2]=-2e_2$, $[e_1,e_3]=2e_3$, $ [e_2,e_3]=-e_1$, $[e_1,e_5]=2e_5$,  $[e_1,e_6]=-2e_6$, $[e_2,e_4]=e_6$,  $[e_2,e_5]=-2e_4$,
$[e_3,e_4]=-e_5$, $[e_3,e_6]=2e_4$. 
Since  $\mathfrak{sl}(2)$ is simple, the results of Section \ref{chap:semi-simple} also  apply. In particular, $\Psi=\{0\}.$ 
A derivation $\alpha=ad_{x_0}$ of $\mathfrak{sl}(2)$, for some $x_0\in \mathfrak{sl}(2),$  and an 
element $\xi=ad_{x_0}^*+\lambda id_{\mathfrak{sl}(2)^*}$ of $\mathcal E$ corresponding 
to $\alpha,$ are given by $\alpha(e_1)=-2de_2+2ce_3$, $\alpha(e_2)=-ce_1-ae_2$, $\alpha(e_3)=de_1+ae_3$ and 
$\xi(e_4)=\lambda e_4+ce_5-de_6$, $\xi(e_5)=2de_4+(\lambda+a)e_5$, $\xi(e_6)=-2ce_4+(\lambda-a)e_6$,
for some $a,\lambda,c,d$ in $\R$.  Moreover, a cocycle $\beta \in \mathcal Q$ is  given by  
$
\beta(x)=-ad_{x}^*f_0, 
$ for some $f_0$ in $\mathfrak{sl}(2)^*$ and any $x$ in $\mathfrak{sl}(2)$. We have  
$\beta(e_1)=-he_5-ge_6$, $\beta(e_2)=he_4+ee_6$ and $\beta(e_3)=ge_4-ee_5$,
 for some $e,g,h$ in $\R$. Hence, the matrix of a derivation $\phi$ of $T^*\mathfrak{sl}(2)$   in the basis $(e_1,e_2,e_3,e_4,e_5,e_6),$ is
\beq \phi=
\left(
\begin{array}{rrrrcc}
0 & -c & d & 0 & 0 & 0  \cr 
-2d& -a & 0& 0 & 0 & 0  \cr 
2c & 0  & a &0 & 0 & 0 \cr 
0 & h & g & \lambda & 2d &-2c  \cr 
-h& 0 & -e&c &a+\lambda & 0  \cr
- g & e & 0&-d& 0  &\lambda -a
\end{array}\right)\nonumber
\eeq 
$= a\phi_1+\lambda\phi_2+c\phi_3+d\phi_4+e\phi_5+h\phi_6+g\phi_7,$
 where $a,\lambda,c,d,e,h,g$ are real numbers and 
$\phi_1:=-e_{22}+e_{33}+e_{55}-e_{66},$ $\phi_2:=e_{44}+e_{55}+e_{66},$
$\phi_3:=-e_{12}+2e_{31}-2e_{46}+e_{54},$ $\phi_4:=e_{13}-2e_{21}+2e_{45}-e_{64},$ $\phi_5:=-e_{53}+e_{62},$ 
$\phi_6:=e_{42}-e_{51},$ $\phi_7:=e_{43}-e_{61}.$ 
Hence the Lie algebra $\der(\D)$ is $7$-dimensional. In the basis $(\phi_1, \ldots, \phi_7)$, its Lie bracket reads
$[\phi_1,\phi_3]=\phi_3$,
$[\phi_1,\phi_4] = -\phi_4$,
$[\phi_3,\phi_4]= 2\phi_1$,
$[\phi_1,\phi_6]=\phi_6$,
$[\phi_1,\phi_7] = - \phi_7$,
$[\phi_2,\phi_5]=\phi_5$,
$[\phi_2,\phi_6]=\phi_6$,
$[\phi_2,\phi_7]=\phi_7$,
$[\phi_3,\phi_5]=-2\phi_6$,
$[\phi_3,\phi_7]= - \phi_5$,
$[\phi_4,\phi_5]= - 2\phi_7$,
$[\phi_4,\phi_6] = - \phi_5$. One realizes that this is the Lie algebra $\der(\D))=\mathfrak{sl}(2,\R)\ltimes \G_{id}$, 
where $\mathfrak{sl}(2,\R)=\sspan(\phi_1,\phi_3,\phi_4)$ and as above, $\G_{id}$ is the semi-direct product $\G_{id}=\R \phi_2\ltimes \R^3$ 
of the abelian Lie algebras $\R^3=\sspan(\phi_5,\phi_6,\phi_7)$ and $\R \phi_2$ obtained by letting $\phi_2$ act as the identity 
map on $\R^3.$ Again,  $\der(\D)$ is a Lie superalgebra and also a contact Lie algebra, with e.g. $\eta:=s\phi_1^*+t\phi_5^*$ as 
a contact form, $s,t\in\R -\{0\}$.

\subsection{An example of a solvable Lie algebra in dimension 3} 
Consider $\G:=\sspan(e_1,e_2,e_3)$ with $[e_1,e_3]=-ae_1-e_2$, $[e_2,e_3] =e_1-ae_2$.  
If as above, we set $e_4:=e_1^*$, $e_5:=e_2^*$, $e_6:=e_3^*$, then in the basis $(e_1,e_2,e_3, e_4,e_5,e_6)$ 
the Lie bracket of $\D:=T^*\G$, reads $[e_1,e_3] =-ae_1-e_2$, $[e_1,e_4]= ae_6$, $[e_1,e_5] =e_6$, $[e_2,e_3]=e_1-ae_2$, 
$[e_2,e_4] = -e_6$, $ [e_2,e_5] =ae_6$, $ [e_3,e_4] =-ae_4+e_5$, $[e_3,e_5]= -e_4 - ae_5$.  
Elements $\alpha \in \der(\G)$  are of the form $\alpha(e_1)=\alpha_{11}e_1-\alpha_{12}e_2$,   
$\alpha(e_2)=\alpha_{12}e_1+\alpha_{11}e_2$, $\alpha(e_3)=\alpha_{13}e_1+\alpha_{23}e_2$, 
 while a bi-invariant endomorphisms of $\G$  are those $j$, such that $j(x)=\lambda$ $x$, for every $x\in \G$, 
 where  $\alpha_{11}$, $\alpha_{12}$, $\alpha_{13}$, $\alpha_{23}$, $\lambda$ are scalars.
Cocycles $\beta \in \mathcal Q$ are of  the form  $\beta(e_1)=-\beta_{12} e_5-\beta_{13}e_6$,  
$\beta(e_2)=\beta_{12} e_4-\beta_{23}e_6$, $\beta(e_3)=\beta_{13}e_4+\beta_{23} e_5+\beta_{33}e_6$,
 if $a=0$ and when $a\neq 0$ we have $\beta(e_1)=-\beta_{13}e_6$, $\beta(e_2)=-\beta_{23}e_6$, 
 $\beta(e_3)=\beta_{13}e_4+\beta_{23} e_5+\beta_{33}e_6$. 
On the other hand, when  $a=0$, the elements $\psi$ of $\Psi$ take the form   $\psi(e_1)=-\psi_{12}e_5$, $\psi(e_2)=\psi_{12}e_4$, 
and  $\psi(e_3)=0$,  whereas  $\Psi=\{0\}$ when $a\neq 0$. \\
(1) Case $a=0$:  a derivation of $\D$ have a matrix of the form
\beq
 \phi=\left(\begin{array}{cccccc}
\alpha_{11}&\alpha_{12} &   \alpha_{13}         &    0             &\psi_{12}   & 0      \\
-\alpha_{12}&\alpha_{11}& \alpha_{23}           & -\psi_{12}       &     0      & 0       \\
   0        &    0      &   0                   &    0             & 0          & 0  \\
   0        & \beta_{12}& \beta_{13}            &\lambda-\alpha_{11}& \alpha_{12}& 0 \\
-\beta_{12} &    0      & \beta_{23}            &      -\alpha_{12}&\lambda -\alpha_{11} & 0 \\ 
-\beta_{13} &-\beta_{23}& \beta_{33}            &      -\alpha_{13}&      -\alpha_{23} &\lambda
\end{array}\right) \nonumber
\eeq
$
=\alpha_{11}\phi_1+\alpha_{12}\phi_2+\alpha_{13}\phi_3 +\alpha_{23}\phi_4  +\lambda \phi_5   
 +\psi_{12}\phi_6 +  \beta_{12}\phi_7 + \beta_{13}\phi_8 + \beta_{23}\phi_9 +  \beta_{33}\phi_{10}, \nonumber
$ 
   with $\phi_{1}:=e_{11}+e_{22}-e_{44}-e_{55}$, 
 $\phi_{2}:=e_{12}-e_{21}+e_{45}-e_{54}$,  
 $\phi_{3}:=e_{13}-e_{64}$,
  $\phi_{4}:=e_{23}-e_{65}$,  
$\phi_{5}:=e_{44}+e_{55}+e_{66}$, 
 $\phi_{6}:=e_{15}-e_{24}$,
 $\phi_{7}:= e_{42}-e_{51}$, 
 $\phi_{8}:= e_{43}-e_{61}$,  
$\phi_{9}:=e_{53}-e_{62}$,  
$\phi_{10}:= e_{63}$.
 So that,  $\dim\der(\D)=10$ and $\der(\D)$ has Lie bracket: 
 $[\phi_{1},\phi_{3}]=\phi_{3}$, $[\phi_{1},\phi_{4}]=\phi_{4}$, $[\phi_{1},\phi_{6}]=2\phi_{6}$, 
 $[\phi_{2},\phi_{3}]=-\phi_{4}$, $[\phi_{2},\phi_{4}]=\phi_{3}$, $[\phi_{2},\phi_{8}]=-\phi_{9}$,  
 $[\phi_{2},\phi_{9}]=\phi_{8}$,  
$[\phi_{3},\phi_{7}]=\phi_{9}$, 
$[\phi_{4},\phi_{7}]=-\phi_{8}$, 
$[\phi_{5},\phi_{6}]=-\phi_{6}$, 
$[\phi_{5},\phi_{7}]=\phi_{7}$, 
$[\phi_{5},\phi_{8}]=\phi_{8}$, 
$[\phi_{5},\phi_{9}]=\phi_{9}$, 
$[\phi_{5},\phi_{10}]=\phi_{10}$, 
$[\phi_{6},\phi_{7}]=-\phi_{1}$, 
$[\phi_{6},\phi_{8}]=-\phi_{4}$, 
$[\phi_{6},\phi_{9}]=\phi_{3}$.  Moreover, $\G_0\oplus\tilde\G_1:=\sspan(\phi_{1}, \phi_{2}, \phi_{3}, \phi_{4},\phi_{5},\phi_{6})$ 
and  $\G_0\oplus\tilde\G_1':=\sspan(\phi_{1}, \phi_{2}, \phi_{3}, \phi_{4},\phi_{5},\phi_{7},\phi_{8}, \phi_{9}, \phi_{10})$ are Lie  superalgebras.
\\
(2) Case $a\neq 0$: the derivations of $\D$ are
\beq
\phi=  \left(\begin{array}{cccccc}
\alpha_{11}&\alpha_{12} &   \alpha_{13}         &    0             &   0  & 0      \\
-\alpha_{12}&\alpha_{11}& \alpha_{23}           &    0       &     0      & 0       \\
   0        &    0      &   0                   &    0             & 0          & 0  \\
   0        &    0      & \beta_{13}            &\lambda-\alpha_{11}&        \alpha_{12}& 0 \\
   0        &    0      & \beta_{23}            &      -\alpha_{12}&\lambda-\alpha_{11} & 0 \\ 
-\beta_{13} &-\beta_{23}& \beta_{33}            &      -\alpha_{13}&      -\alpha_{23} &\lambda
\end{array}\right)\nonumber
\eeq
$=\alpha_{11}\phi_1+\alpha_{12}\phi_2+\alpha_{13}\phi_3 +\alpha_{23}\phi_4 +\lambda \phi_5  
+ \beta_{13}\phi_8 + \beta_{23}\phi_9 +  \beta_{33}\phi_{10}$,  so $\dim\der(\D)=8.$  
The Lie bracket of $\der(\D$) is:
$[\phi_{1},\phi_{3}]=\phi_{3}$, 
$[\phi_{1},\phi_{4}]= 
 -[\phi_{2},\phi_{3}]=\phi_{4}$,
 $[\phi_{2},\phi_{4}]=\phi_{3}$,  
 $[\phi_{2},\phi_{8}]=-\phi_{9}$,  
 $[\phi_{2},\phi_{9}]= 
[\phi_{5},\phi_{8}]=\phi_{8}$, 
$[\phi_{5},\phi_{9}]=\phi_{9}$, 
$[\phi_{5},\phi_{10}]=\phi_{10}$.   
From Proposition \ref{superalgebras},  $\der(\D$) is a Lie  superalgebra.

\subsection{The $4$-dimensional oscillator algebra}

The $4$-dimensional oscillator Lie algebra is the space $\G= \sspan\{e_{1},e_2,e_3,{e}_4\}$ with bracket:
$[e_{1},e_3] =  e_4$, $[e_{1}, e_4] = -  e_3$, $[e_3, e_4] = e_2$. 
Set $e_5:=e^*_{1},$~$e_6:=e^*_2,$~$e_7:=e^*_3,$~$ e_8:=e^*_4.$
The Lie bracket of the  Lie algebra $\D=T^*\G$  reads 
$[e_{1},e_3] =  e_4$, $[e_{1}, e_4] = -  e_3$, $[e_3, e_4] = e_2,$ 
$[e_{1},e_7] =  e_8$, $ [e_{1}, e_8] = -  e_7$, $[e_3, e_6] =- e_8$, 
$[ e_4,e_7] =-  e_{5}$, $[e_3, e_8]= e_{5}$, $[ e_4, e_6] = e_7$.
We define an orthogonal structure $\mu$   on $\G$ by
$\mu(x,y) = x_{1}y_2 + x_2y_{1} + x_3y_3+ x_4  y_4$,
for all $x=x_{1}e_{1}+x_2e_2+x_3e_3+ x_4 e_4$ and $y=y_{1}e_{1}+y_2e_2+y_3e_3+ y_4 e_4$,   
see \cite{bromberg-medina2004}.
The  isomorphism $\theta : \G \to \G^*$ defined by $\langle \theta (x),y\rangle = \mu(x,y)$, for all $x,y$ in $\G$, also reads 
$\theta(e_{1}) = e_6$, $\theta(e_2) = e_{5}$, $\theta(e_3)=e_7$, $\theta( e_4)= e_8$.
Any derivation $\phi$ of $T^*\G$ is of the form
$
\phi(x,f) = \Big( \alpha_1(x) + j_2 \circ \theta^{-1}(f) \,,\, \theta\circ \alpha_2(x) + (j_1-\alpha_1)^t(f)\Big)
$, for every $(x,f)$ in $T^*\G$, where $\alpha_1,\alpha_2$ are in $\der(\G)$ and $j_1,j_2$ are in $\mathcal{J}$.
Derivations  $\alpha_1$ and $\alpha_2$ of $\G$ are given by
$\alpha_1(e_{1})=a_{21}e_2-a_{23}e_3-a_{24}e_4$, $\alpha_1(e_2)=2a_{33}e_2$, 
$\alpha_1(e_3)=a_{23}e_2+a_{33}e_3-a_{34} e_4$, $\alpha_1( e_4)=a_{24}e_2+a_{34}e_3+a_{33} e_4$ and 
$\alpha_2(e_{1})=b_{21}e_2-b_{23}e_3-b_{24} e_4$, $\alpha_2(e_2)=2b_{33}e_2$, 
$\alpha_2(e_3)=b_{23}e_2+b_{33}e_3-b_{34} e_4$, $\alpha_2( e_4)=b_{24}e_2+b_{34}e_3+b_{33} e_4$, where 
$a_{ij},b_{ij}\in\mathbb \R.$
Now bi-invariant tensors $j_1,j_2:\G \to \G$ are given by
$j_1(e_{1})=\lambda e_{1}+ae_2$, $j_2(e_{1})=\lambda' e_{1}+be_2$, $j_1(e)=\lambda e$ and $j_2(e)=\lambda' e$, if $e\in \{e_2,e_3,e_4\}$, 
where $\lambda$, $\lambda'$, $a$ and $b$ are real numbers.
The condition $Im(j_2) \subset Z(\G) =\mathbb R e_2$ gives  $j_2(e_{1})=b'e_2,$   $j_2(e_2)=j_2(e_3)=j_2( e_4)=0.$  
 We get $j_2\circ\theta^{-1}(e_6)=be_2$ and 
$j_2\circ\theta^{-1}(e_{5})=j_2\circ\theta^{-1}(e_7)=j_2\circ\theta^{-1}(e_8)=0$.
Last, we have $\theta \circ \alpha_2(e_{1})=b_{21}e_{5}-b_{23}e_7-b_{24} e_8$, $\theta\circ\alpha_2(e_2)=2b_{33}e_{5}$, 
$\theta\circ\alpha_2(e_3)=b_{23}e_{5}+b_{33}e_7-b_{34} e_8$, 
$\theta\circ\alpha_2(\check e_3)=b_{24}e_{5}+b_{34}e_7+b_{33} e_8.$ 
%
Thus, derivations $\phi\in\der(\D)$ are of the form\\
\beq \phi=\left(
\begin{array}{cccccccc} 
0               &   0        &    0      &     0   &          0       &    0     &       0        &       0        \cr
a_{21}          & 2a_{33}    &   a_{23}  &  a_{24} &          0       &     b    &       0        &       0        \cr
-a_{23}         &   0        &   a_{33}  &  a_{34} &          0       &    0     &       0        &       0        \cr
-a_{24}         &   0        &  -a_{34}  &  a_{33} &          0       &    0     &       0        &       0        \cr
b_{21}          & 2b_{33}    &  b_{23}   &  b_{24} & \lambda  & a-a_{21} &    a_{23}     &    a_{24}     \cr
0               &   0        &     0     &   0     &          0       & \lambda -2a_{33} &       0        &       0        \cr
- b_{23}        &   0        &   b_{33}  & b_{34}  &          0       &- a_{23}   & \lambda-a_{33} &    a_{34}     \cr
- b_{24}        &   0        & -b_{34}   & b_{33}  &          0       &-a_{24}   &   -  a_{34}     & \lambda-a_{33}
\end{array}\right)\nonumber\eeq
\\ $= 
\lambda \phi_1     +  a_{33}\phi_2   + a_{21}\phi_3+a_{23}\phi_4+a_{24}\phi_5+a_{34}\phi_6    +b\phi_7+b_{21}\phi_8 +b_{33}\phi_9+b_{23}\phi_{10}+b_{24}\phi_{11}  +b_{34}\phi_{12}+a\phi_{13},$   ~
  where     $\phi_1 =e_{55}+e_{66}+e_{77}+e_{88}$,  $\phi_2 =2e_{22}+e_{33}+e_{44}-2e_{66}-e_{77}-e_{88},$  $\phi_3 =e_{21}-e_{56},$  $\phi_4 =e_{23}-e_{31}-e_{76}+e_{57}$,  $\phi_5 =e_{24}-e_{41}-e_{86}+e_{58}$,  $\phi_6 =e_{34}-e_{43}-e_{87}+e_{78},$  $\phi_7 =e_{26},$   $\phi_8 =e_{51},$  $\phi_9 =2e_{52}+e_{73}+e_{84},$  $\phi_{10} =e_{53}-e_{71},$  $\phi_{11} =e_{54}-e_{81},$  $\phi_{12} =e_{74}-e_{83},$ $\phi_{13}=e_{56}.$
So $\der(\D)$   has dimension $13$ , with Lie bracket          $[\phi_1,\phi_7]=-\phi_7$, $[\phi_1,\phi_8]=\phi_8$, $[\phi_1,\phi_{9}]=\phi_{9}$,
$[\phi_1,\phi_{10}]=\phi_{10}$, $[\phi_1,\phi_{11}]=\phi_{11}$, $[\phi_1,\phi_{12}]=\phi_{12}$,
$[\phi_2,\phi_3]=2\phi_3$, $[\phi_2,\phi_4]=\phi_4$, $[\phi_2,\phi_5]=\phi_5$, $[\phi_2,\phi_7]=4\phi_7$, 
$[\phi_2,\phi_{9}]=-2\phi_{9}$, $[\phi_2,\phi_{10}]=-\phi_{10}$, $[\phi_2,\phi_{11}]=-\phi_{11}$, 
$[\phi_2,\phi_{12}]=-2\phi_{12}$, $[\phi_2,\phi_{13}]=2\phi_{13}$, $[\phi_3,\phi_9]=-2\phi_8$,
$[\phi_4,\phi_6]=\phi_5$, $[\phi_4,\phi_9]=-\phi_{10}$, $[\phi_4,\phi_{12}]=\phi_{11}$,
$[\phi_5,\phi_6]=-\phi_4$, $[\phi_5,\phi_{9}]=-\phi_{11}$, $[\phi_5,\phi_{12}]=-\phi_{10}$, 
$[\phi_6,\phi_{10}]=-\phi_{11}$, $[\phi_6,\phi_{11}]=\phi_{10}$, $[\phi_7,\phi_{9}]=-2\phi_{13}$. 
Let $\G_0:=\sspan(\phi_1,\phi_2,\phi_3, \phi_4,\phi_5, \phi_6, \phi_{13}),$
 $\mathcal Q:=\sspan(\phi_8,\phi_9,\phi_{10},\phi_{11}, \phi_{12}),$ $\Psi=\mathbb R\phi_7.$ Then $\G_0\oplus\mathcal Q$ and  $\G_0\oplus\Psi$ are Lie superalgebras. 

\section{Conclusion}\label{openproblems}
Given two left or right invariant structures of the same `nature' (e.g. physical, affine, symplectic, complex,
Riemannian,\ldots) on $T^*G$, one wonders whether they are
equivalent, {\it i.e.} if there exists an automorphism of $T^*G$ mapping one to the other.
By taking the values of those structures at the unit of $T^*G,$
the problem translates to finding an automorphism of Lie algebra mapping
two structures of $\D.$ The work within this paper may also be seen as a useful tool for the study of such structures.  
For more discussions on structures and problems on $T^*G$, see e.g.
\cite{alekseevsky-grabowski94}, \cite{bajo-benayadi-medina}, \cite{di-me-cybe}, \cite{drinfeld}, \cite{feix}, \cite{kronheimer}, 
\cite{marle}, \cite{marsden-ratiu-weinstein}, \cite{montgomery}.

\vskip 0.2cm 
\noindent
 {\bf Acknowledgements.} The authors would like to thank the associated editor,  the referees, 
Prof. K. H. Neeb, O. R. Abib, M. N. Boyom and E. Okassa for 
 valuable remarks and suggestions that help improve the present paper. 
 The second author's Ph.D. thesis was funded  by ICTP-Trieste (Italy) through the ICAC-3 Programme, a part of 
 this work was started during his visit to Universit\'e Montpellier II  
 (France) funded by the SARIMA project, whereas, the last version was written during his visit to IMSP (Benin) funded by the DAAD.  To all these institutions, he wishes to express his gratitude and thanks.


\label{lastpage-01}
\end{document}